\documentclass[12pt]{article}

\usepackage[T1]{fontenc}
\usepackage{avant}
\usepackage[cp1250]{inputenc}
\usepackage{amsmath,amssymb,amsthm}
\usepackage{geometry}
\usepackage{caption}
\usepackage{float}
\usepackage{graphicx}
\usepackage{dynkin-diagrams}
\usepackage{makeidx}
\usepackage[matrix,arrow,curve]{xy}
\usepackage{boxedminipage}
\usepackage{epic}
\usepackage{longtable}
\usepackage{ifthen}
\usepackage{tabularx}
\usepackage{parskip}
\usepackage{verbatim}
\usepackage{multirow}

\usepackage[ruled,linesnumbered,norelsize]{algorithm2e}
\usepackage{authblk}

\newtheorem{theorem}{Theorem}
\newtheorem{proposition}{Proposition}

\newtheorem{definition}{Definition}

\newtheorem{corollary}{Corollary}
\newtheorem{remark}{Remark}

\newtheorem{question}{Question}
\newtheorem{problem}{Problem}

\newcommand{\g}{\mathfrak{g}}
\newcommand{\ssl}{\mathfrak{sl}}
\newcommand{\h}{\mathfrak{h}}
\newcommand{\myl}{\mathfrak{l}}
\newcommand{\z}{\mathfrak{z}}
\newcommand{\myc}{\mathfrak{c}}
\renewcommand{\a}{\mathfrak{a}}
\newcommand{\p}{\mathfrak{p}}
\newcommand{\myk}{\mathfrak{k}}
\newcommand{\SL}{\mathrm{\mathop{SL}}}
\newcommand{\R}{\mathbb{R}}
\newcommand{\HH}{\mathcal{H}}

\usepackage{color}
\newcommand{\comm}{\textcolor{blue}}

\title{Non-virtually abelian discontinuous group actions vs.  proper $SL(2,\mathbb{R})$-actions on  homogeneous spaces}
\author{Maciej Boche\'nski,  Willem A. de Graaf, Piotr Jastrz\c ebski,\\
  Aleksy Tralle}

\begin{document}

\maketitle
\abstract{We develop algorithms and computer programs which verify criteria of properness of discrete group actions on semisimple homogeneous spaces. We apply these algorithms to find new examples of non-virtually abelian discontinuous group actions on homogeneous spaces which do not admit proper $SL(2,\mathbb{R})$-actions.}
\vskip10pt
\noindent {\sl Key words and phrases:} proper action, homogeneous space, semisimple Lie algebra, {\sf GAP4}, computing with simple Lie algebras.
\vskip10pt
\noindent {\sl 2020 Mathematics Subject Classification:}  17B05, 17B10, 53C30, 57S25, 57S30.
\vskip10pt

\section{Proper actions of non-virtually abelian discrete subgroups}

Assume that $G$ is a linear connected semisimple real Lie group and $H\subset G$ is a closed
reductive subgroup of $G$ with finitely many connected components such that $G/H$
is non-compact. Denote by $\mathfrak{g},\mathfrak{h}$ the Lie algebras of $G, H$ respectively. Recall that a
discrete group is non-virtually abelian if it does not contain an abelian subgroup of
finite index. Consider the following three conditions:

\begin{itemize}
\item C1 := the space $G/H$ admits a properly discontinuous action of an infinite discrete subgroup of $G$,
\item C2 := the space $G/H$ admits a properly discontinuous action of a non-virtually
abelian infinite discrete subgroup of $G$,
\item C3 := the space $G/H$ admits a proper action of a subgroup $L\subset G$ locally
isomorphic to $SL(2,\mathbb{R})$.
\end{itemize}

We will say that $G/H$ is a C$i$ space, $i=1,2,3$ if $G/H$ fulfills the condition C$i$. If
$H$ is compact then every closed subgroup of $G$ acts \comm{properly} on $G/H$ and all  the three
 conditions above hold. However this is not the case for non-compact $H$. For instance we have the Calabi-Markus phenomenon  (see, for example \cite[Corollary 4.4]{K89}):

$$G/H \textrm{ is C1 if and only if } \operatorname{rank}_{\mathbb{R}}\mathfrak{g} > \operatorname{rank} _{\mathbb{R}}\mathfrak{h}.$$

\noindent One of the reasons for considering these properties comes from the idea of the "continuous analogue" due to T. Kobayashi: under the above mentioned assumptions a homogeneous space $G/H$ admits a proper action of an infinite discontinuous group if it admits a proper action of a one-dimensional non-compact closed subgroup in  $G$. Going along this line of thinking, it was shown in \cite{ok} that a semisimple symmetric space admits a non-virtually abelian properly discontinuous action of a subgroup $\Gamma\subset G$ if and only if it admits a proper action of a three-dimensional non-compact simple Lie subgroup of $G$. Thus, in our terminology, for the class of symmetric spaces, C2 and C3 are equivalent (\cite[Theorem 2.2]{ok}).  Some other classes of homogeneous spaces of class C3 were described in \cite{btjo} and of class C2 in \cite{bjt-8}.
In \cite{ben}  a necessary and sufficient condition for $G/H$ to satisfy C2 was found. Let us explain it in some detail. 
Let $K\subset G$ be  a maximal compact subgroup of $G$ and denote by $\mathfrak{k}$ the Lie algebra
of $K$. Let
$\mathfrak{g}=\mathfrak{k}+\mathfrak{p}$
be a Cartan decomposition of $\mathfrak{g}$ induced by a Cartan involution fixing $\mathfrak{k}$ and
choose a maximal abelian subspace $\mathfrak{a}$ of $\mathfrak{p}$. The little Weyl group is defined as
$W:= N_K (\mathfrak{a}) /Z_K (\mathfrak{a})$ where $N_K (\mathfrak{a})$ and  $Z_K (\mathfrak{a})$ denote respectively the normalizer and the centralizer
of $\mathfrak{a}$ in $K$. We fix  a positive system of the restricted root system of $\mathfrak{g}$ determined by $\mathfrak{a}$ and denote by $\mathfrak{a}^{+}$ the (closed) positive Weyl chamber. The finite group $W$ acts on $\mathfrak{a}$  by orthogonal transformations as a finite  group generated by reflections in the hyperplanes determined by simple roots of the  restricted root
system of $\mathfrak{g}$. Let $w_0\in W$ be the longest element and put

$$\mathfrak{b}^{+} := \{ X\in\mathfrak{a}^{+} |  -w_0 (X) = X\},$$
$$\mathfrak{b} := \textrm{Span}_{\mathbb{R}}(\mathfrak{b}^{+}).  \label{eq:defb}$$

We may assume that $\theta|_{\mathfrak{h}}$ is a Cartan involution of $\mathfrak{h}$ so that $\mathfrak{h} = \mathfrak{k}_{\mathfrak{h}}+\mathfrak{p}_{\h}$ is a Cartan
decomposition of $\mathfrak{h}$ such that $\mathfrak{k}_{\mathfrak{h}} \subset \mathfrak{k}, \mathfrak{p}_{\h}\subset \mathfrak{p}$. We also may assume that a maximal
abelian subspace $\mathfrak{a}_{\h}$ of $\mathfrak{p}_{\h}$ belongs to $\mathfrak{a}$ (see \cite{K89}). The  following theorem yields the required criterion:

\begin{theorem}[\cite{ben}, Theorem 1]\label{thm:benois} $G/H$ is C2 if and only if for every $w$ in $W$, $w\cdot \mathfrak{a}_{\h}$
does not contain $\mathfrak{b}$. In this case, one can choose a discrete subgroup of $G$ acting properly discontinuously on $G/H$ to be free and Zariski dense in $G$.
\end{theorem}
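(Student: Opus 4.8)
The plan is to translate properness into the language of Cartan projections and limit cones. Write $\g=\myk+\p$ and fix the Cartan projection $\mu\colon G\to\a^{+}$, the unique map with $g\in K\exp(\mu(g))K$; it is bi-$K$-invariant and satisfies $\mu(g^{-1})=\iota(\mu(g))$, where $\iota:=-w_0$ is the opposition involution of $\a^{+}$. First I would record Kobayashi's properness criterion: a closed subgroup $L$ acts properly on $G/H$ if and only if for every $R>0$ the set $\{\ell\in L: \operatorname{dist}(\mu(\ell),\mu(H))\le R\}$ is relatively compact (finite, when $L$ is discrete). Since $H=K_H\exp(\a_{\h}^{+})K_H$ with $\a_{\h}\subset\a$ and $\mu$ is bi-$K$-invariant, one computes $\mu(H)=(W\cdot\a_{\h})\cap\a^{+}=:\mathcal{C}_H$, a closed $\iota$-invariant cone. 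Finally I would reformulate the hypothesis: ``$w\cdot\a_{\h}\not\supseteq\mathfrak{b}$ for all $w$'' is equivalent to $\mathfrak{b}^{+}\not\subseteq\mathcal{C}_H$.

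The easy half of the dictionary is that if $\mathcal{C}_\Gamma$ denotes the limit cone of a discrete subgroup $\Gamma$ (the accumulation directions of $\mu(\gamma)/\|\mu(\gamma)\|$), then $\mathcal{C}_\Gamma\cap\mathcal{C}_H=\{0\}$ forces properness, since two closed cones meeting only at the origin make a positive angle and hence $\operatorname{dist}(\mu(\gamma),\mathcal{C}_H)\ge c\|\mu(\gamma)\|\to\infty$. For the ``if'' direction I would combine this with Benoist's Schottky/ping-pong construction of Zariski-dense free subgroups with prescribed limit cone. Because $\Gamma=\Gamma^{-1}$, the limit cone $\mathcal{C}_\Gamma$ is $\iota$-invariant, so I pick a vector $v_0\in\mathfrak{b}^{+}\setminus\mathcal{C}_H$ — available precisely by the hypothesis — choose a small open $\iota$-invariant cone $C\ni v_0$ disjoint from $\mathcal{C}_H\setminus\{0\}$ (possible since $v_0$ is $\iota$-fixed, $\mathcal{C}_H$ is closed and $\iota$-invariant, and $v_0$ may be taken in the interior of $\a^{+}$), and invoke the construction to obtain a Zariski-dense free subgroup $\Gamma$ with $\mathcal{C}_\Gamma\subset C$. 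Then $\mathcal{C}_\Gamma\cap\mathcal{C}_H=\{0\}$, so $\Gamma$ acts properly discontinuously; being free of rank $\ge 2$ it is non-virtually abelian and Zariski dense, yielding both C2 and the refined conclusion.

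For the converse I argue contrapositively: assume $\mathfrak{b}^{+}\subseteq\mathcal{C}_H$ and let $\Gamma$ be any infinite discrete subgroup acting properly discontinuously; I must show $\Gamma$ is virtually abelian. After replacing $G$ by the identity component of the Zariski closure of $\Gamma$ (reductive, with compatibly embedded Cartan data) I may assume $\Gamma$ Zariski dense; if $\Gamma$ were not virtually abelian it would contain a nonabelian free subgroup by the Tits alternative, and by Benoist's theorem its limit cone would be convex with nonempty interior. The crucial elementary observation is that an $\iota$-invariant closed convex cone $C\subseteq\a^{+}$ with nonempty interior meets $\mathfrak{b}^{+}\setminus\{0\}$: for $Y\in\operatorname{int}(C)$ not fixed by $\iota$, the point $\tfrac12(Y+\iota Y)$ lies in $C$, in $\a^{+}$ (convexity of the chamber), and in the $\iota$-fixed space, hence in $\mathfrak{b}^{+}\setminus\{0\}$. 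Applying this to $\mathcal{C}_\Gamma$ produces a nonzero $v\in\mathcal{C}_\Gamma\cap\mathfrak{b}^{+}\subseteq\mathcal{C}_\Gamma\cap\mathcal{C}_H$, which by Benoist's coarse density of the Cartan projection of a Zariski-dense group yields a sequence $\gamma_n\to\infty$ with $\operatorname{dist}(\mu(\gamma_n),\mathcal{C}_H)$ bounded, contradicting properness; hence $\Gamma$ is virtually abelian.

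The main obstacle is exactly this converse implication, namely upgrading ``the limit cones meet'' into an actual bounded-distance escaping sequence. This is where Benoist's hard analytic results on Zariski-dense subgroups — nonempty-interior limit cones, their realizability by Schottky groups, and the coarse density of Cartan projections — are indispensable, and where the reduction from a general discrete group to a Zariski-dense one, together with the control of chamber-boundary and non-regular directions, must be carried out carefully.
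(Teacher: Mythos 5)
First, a point of reference: the paper never proves this statement. It is imported verbatim as Benoist's Theorem 1 from \cite{ben} and used as a black box, so your proposal can only be measured against Benoist's own argument. Your general framework is the right one and is indeed Benoist's: the Cartan projection $\mu$, the properness criterion in terms of tubes around $\mu(H)$, the identity $\mu(H)=(W\cdot\a_{\h})\cap\a^{+}$, the reformulation of the hypothesis as $\mathfrak{b}^{+}\not\subseteq\mathcal{C}_H$, and Schottky/ping-pong groups for the existence half. That half of your sketch is essentially correct: the choice of a regular $\iota$-fixed $v_0$ is legitimate (the relative interior of $\mathfrak{b}^{+}$ lies in the interior of $\a^{+}$), and citing Benoist's construction of Zariski-dense free subgroups with limit cone in a prescribed $\iota$-invariant open cone is fair, since that construction is the real content of this direction.

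The converse, however, contains two genuine gaps. (1) The Tits alternative does not give what you claim: a non-virtually-abelian discrete linear group need not contain a nonabelian free subgroup, because it may be virtually solvable. The discrete Heisenberg group inside $SL(3,\mathbb{R})$ is nilpotent, hence contains no free subgroup of rank $2$, yet it is not virtually abelian. Your argument therefore only excludes proper actions of groups containing free subgroups, and says nothing about nilpotent or solvable non-virtually-abelian discontinuous groups --- which are precisely part of what Benoist's theorem rules out (for a unipotent $u\neq 1$ one needs a separate argument, e.g.\ that $\mu(u^{n})$ stays within \emph{bounded} distance of the ray $\mathbb{R}^{+}h$, where $h$ is the dominant neutral element of an $\mathfrak{sl}_2$-triple through $\log u$ and hence lies in $\mathfrak{b}^{+}$). (2) The step upgrading ``$\mathcal{C}_\Gamma\cap\mathcal{C}_H\neq\{0\}$'' to a sequence with $\operatorname{dist}(\mu(\gamma_n),\mathcal{C}_H)$ bounded is not a citable theorem in the form you invoke. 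Membership of $v$ in the limit cone only yields $\mu(\gamma_n)=\|\mu(\gamma_n)\|(v/\|v\|+o(1))$, i.e.\ sublinear deviation from the ray, and sublinear deviation does \emph{not} contradict properness: the criterion requires infinitely many points in a fixed tube around $\mu(H)$, and a sequence whose distance to $\mathcal{C}_H$ grows like $\sqrt{\|\mu(\gamma_n)\|}$ escapes every tube. What actually closes this gap in Benoist's work is the product estimate for loxodromic elements $a,b$ in general position, $\mu(a^{n}b^{m})=n\lambda(a)+m\lambda(b)+O(1)$: taking $a=\gamma$ loxodromic and $b=\delta\gamma^{-1}\delta^{-1}$ for suitable $\delta$ (this is exactly where non-commutativity enters), one gets $\mu(\gamma^{n}\delta\gamma^{-n}\delta^{-1})=n\bigl(\lambda(\gamma)+\iota\lambda(\gamma)\bigr)+O(1)$, a genuinely bounded-distance sequence along $\mathfrak{b}^{+}\subseteq\mathcal{C}_H$. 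Finally, your reduction ``replace $G$ by the identity component of the Zariski closure of $\Gamma$'' is not innocent either: that closure need not be reductive, and replacing $G$ changes $\mu$, $W$ and $\mathfrak{b}$, so neither the hypothesis $\mathfrak{b}^{+}\subseteq\mathcal{C}_H$ nor properness transfers automatically.
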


For example the space $SL(3,\mathbb{R})/SL(2,\mathbb{R})$ fulfills the condition C1 but does not fulfill
the condition C2 and therefore the condition C3 (see \cite[Example 1]{ben}).   
For many important classes of homogeneous spaces C2 and C3 are equivalent. For example, apart from irreducible symmetric spaces, this holds  for some  strongly regular homogeneous spaces (\cite[Theorem 2 and Corollary 1]{b}).
On the other hand,
there are known exactly two examples of spaces which fulfill the condition C2 but
not C3 \cite{b,ok1}. Thus for a space of reductive type we only have the following:
$$C1 \Leftarrow C2 \Leftarrow C3.$$
\noindent The criterion for $G/H$ to be C3 is given in \cite{K89} in a general setting of the properness of the action of a Lie subgroup $L\subset G$ on $G/H$. We describe it in Subsection \ref{subsec:kob}. Studying conditions C2, C3 and relations between them is of great importance. We refer to a recent paper \cite{K022} for a more detailed account. However, let us mention two challenging problems from \cite{K022}.
\begin{problem}\label{prob:c-k} Determine all pairs $(G,H)$ such that $G/H$  admits co-compact discontinuous actions of discrete subgroups $\Gamma\subset G$.
\end{problem}
 \noindent This is a long standing open problem \cite{ko, kod, K89}. There are many partial results, for example, \cite{bt1, bt2, bjt, bjstw, mor}, however, the general case is not settled. 
\begin{problem}\label{prob:c3} Find a necessary and sufficient condition for $G/H$ to admit a discontinuous action of a group $\Gamma\subset G$ isomorphic to a surface group $\pi_1(\Sigma_g)$ with $g\geq 2$.
\end{problem}
\noindent Note that this problem is the same as C3 for the class of symmetric spaces \cite{ok}. The general case is still open. Some partial results on homogeneous spaces which are C3 are obtained in \cite{btjo}.
Also, in this context an important question arises.
\begin{question}\label{quest:c2relc3} What is the relation between classes C2 and C3? 
\end{question}
\noindent The known two examples of homogeneous spaces $G/H$ which are C2 but not C3 are obtained in \cite{b} and \cite{ok1} by applying the criterion of the properness of the Lie subgroup action on $G/H$ (see Subsection \ref{subsec:kob}) and Theorem \ref{thm:benois}.
 One shows that there is
an appropriate subspace $\mathfrak{a}_{m} \subset \mathfrak{a}$ such that $W\mathfrak{a}_m$ does not contain $\mathfrak{b}$ and that (by
Kobayashi's criterion of proper actions and the classification of nilpotent orbits) for
any $\mathfrak{sl}(2,\mathbb{R}) \hookrightarrow \mathfrak{g}$ there exists $g\in G$ such that

$$\operatorname{Ad} _g (\mathfrak{sl}(2,\mathbb{R}) ) \cap \mathfrak{a}_m \neq \{0\}.$$

\noindent In this case $G/A_{m},$ where the subgroup $A_{m}\subset G$ corresponds to $\mathfrak{a}_{m} ,$  is a C2 space but not a C3 space. Notice that these examples are quotients  obtained by explicit calculations. The calculations are fairly easy because in these cases the description of the embedding $\mathfrak{h}\hookrightarrow \mathfrak{g}$ in terms of the root systems is simple. This description enables one to write down the action of the little Weyl group on $\mathfrak{a}_{m}$ and use the known theoretical results.  Thus, these examples are not sufficient for a qualitative understanding of the relation between C2 and C3. On the other hand, one can see  that important Problems \ref{prob:c-k} and \ref{prob:c3} theoretically are settled in terms of root systems and the action of the  little Weyl group, by Theorems \ref{thm:benois} and \ref{thm:kobcrit}. The known examples of $G/H$ which are C2 but not C3 are obtained in the same way. Therefore, from the computational point of view these become problems which could and should be attacked in an algorithmic fashion. We pose the following.

\begin{problem}\label{quest:c2vsc3}  Create  computer algorithms and programs which verify C2 and C3 for general semisimple homogeneous spaces.
\end{problem}
\noindent Our  aim is to solve this problem. We create algorithms verifying C2 versus C3 and implement them in the computational algebra system {\sf GAP4} \cite{gap}.
One can see that Theorem \ref{thm:benois} yields a sufficient condition for the {\it non-existence} of compact quotients $\Gamma\backslash G/H$. Our approach yields methods of computer checking this as well. Note that a substantial amount of work related to the algorithms and computer programs for calculations in Lie algebras and a classification of semisimple  subalgebras in semisimple Lie algebras relevant to this article was done in \cite{ckforms}, \cite{fd}, \cite{dg1}, \cite{dg}. 

The main application of our solution to Problem \ref{quest:c2vsc3} is finding new examples of semisimple homogeneous spaces which are C2 but not C3. Consider the class of homogeneous spaces $G/H$, where $G$ is connected,  linear and \comm{absolutely} simple, and $H$ is a maximal proper semisimple subgroup of $G$ and assume that $\mathfrak{h}^c$ is maximal in $\mathfrak{g}^c$ and $\operatorname{rank} G\leq 8$ (here $\mathfrak{h}^c$,
$\mathfrak{g}^c$ denote the complexifications of $\mathfrak{h}$,
$\mathfrak{g}$ respectively). 
The corresponding pairs $(\mathfrak{g},\mathfrak{h})$ are classified in \cite{dg}. \comm{More precisely, for each complex embedding of a maximal semisimple subalgebra $\mathfrak{h}^c\subset \mathfrak{g}^c$ and each real form $\mathfrak{g}$ of $\mathfrak{g}^c$ in \cite{dg}
  the real forms $\mathfrak{h}$ of $\mathfrak{h}^c$ are determined such that
  there exists an embedding $\mathfrak{h}\subset \mathfrak{g}$. If $\mathfrak{h}$ has a nontrivial centralizer in $\mathfrak{g}$ then this centralizer is also
  given, so that the resulting list consists of reductive subalgebras. The
  resulting database of embeddings of real forms of maximal semisimple subalgebras determined in \cite{dg} will be denoted by
  $\mathcal{G}\mathcal{M}$ in this paper.}
\begin{theorem}\label{thm:main} For any $(\mathfrak{g},\mathfrak{h})\in\mathcal{G}\mathcal{M}$ such that $\operatorname{rank}\mathfrak{g}\leq 6$ conditions C2 and C3 are equivalent. There are two new  examples of $(\mathfrak{g},\mathfrak{h})\in\mathcal{G}\mathcal{M}$ with split $\mathfrak{g}$ and $\mathfrak{h}$ such that $\operatorname{rank}\mathfrak{g}=7,8$ which belong to C2 but not C3:
$$(\mathfrak{e}_{7(7)},\mathfrak{sl}(2,\mathbb{R})\oplus\mathfrak{f}_{4(4)}), (\mathfrak{e}_{8(8)},\mathfrak{f}_{4(4)}\oplus\mathfrak{g}_{2(2)}).$$  
\end{theorem}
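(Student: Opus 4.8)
The plan is to treat the statement as a finite, pair-by-pair verification over the database $\mathcal{G}\mathcal{M}$, driven by the two criteria already recalled. Since for any space of reductive type one has the chain $C1\Leftarrow C2\Leftarrow C3$, the implication $\mathrm{C3}\Rightarrow\mathrm{C2}$ is automatic, so proving the equivalence for a given $(\mathfrak{g},\mathfrak{h})$ reduces to checking the converse $\mathrm{C2}\Rightarrow\mathrm{C3}$; correspondingly, a failure of equivalence is precisely a pair that is C2 but admits no proper $SL(2,\mathbb{R})$-action. Thus for each $(\mathfrak{g},\mathfrak{h})\in\mathcal{G}\mathcal{M}$ with $\operatorname{rank}\mathfrak{g}\le 8$ I would decide C2 by Benoist's criterion (Theorem \ref{thm:benois}) and C3 by Kobayashi's properness criterion (Theorem \ref{thm:kobcrit}), turning both into explicit linear-algebra computations over the restricted root data inside {\sf GAP4}, and then compare the outcomes.

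For the C2 side I would first extract from the embedding $\mathfrak{h}\hookrightarrow\mathfrak{g}$ recorded in $\mathcal{G}\mathcal{M}$ a compatible Cartan decomposition $\mathfrak{g}=\mathfrak{k}+\mathfrak{p}$ with $\mathfrak{a}_{\mathfrak{h}}\subset\mathfrak{a}\subset\mathfrak{p}$, then compute the restricted root system, the closed chamber $\mathfrak{a}^+$, the little Weyl group $W$ acting on $\mathfrak{a}$ through the simple reflections, and its longest element $w_0$. From $w_0$ one reads off $\mathfrak{b}^+$ and hence $\mathfrak{b}=\operatorname{Span}_{\mathbb{R}}(\mathfrak{b}^+)$. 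Benoist's criterion is then a finite loop: $G/H$ is C2 if and only if for no $w\in W$ does $w\cdot\mathfrak{a}_{\mathfrak{h}}$ contain $\mathfrak{b}$, which is a subspace-containment test between the fixed $\mathfrak{b}$ and each of the finitely many subspaces $w\cdot\mathfrak{a}_{\mathfrak{h}}$.

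For the C3 side I would use the Jacobson--Morozov correspondence to replace the embeddings $\mathfrak{sl}(2,\mathbb{R})\hookrightarrow\mathfrak{g}$, up to $G$-conjugacy, by the classification of nilpotent orbits (equivalently $\mathfrak{sl}_2$-triples) in the real form $\mathfrak{g}$. For each such triple the neutral element $h$ can be conjugated into $\mathfrak{a}^+$, producing a single ray $r_L\subset\mathfrak{a}^+$ that is the Cartan projection of the associated subgroup $L$. By Kobayashi's criterion, $L$ acts properly on $G/H$ if and only if this Cartan projection meets that of $H$ only in the origin, which here means exactly that $r_L$ meets $\bigcup_{w\in W} w\cdot\mathfrak{a}_{\mathfrak{h}}$ only in $\{0\}$ --- the concrete opposite of the quoted non-properness condition $\operatorname{Ad}_g(\mathfrak{sl}(2,\mathbb{R}))\cap\mathfrak{a}_m\neq\{0\}$. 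Hence $G/H$ is C3 if and only if at least one conjugacy class of $\mathfrak{sl}(2,\mathbb{R})$ yields such a transverse ray, and it fails C3 precisely when every class does not.

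Running both checks over $\mathcal{G}\mathcal{M}$ then delivers the theorem: for $\operatorname{rank}\mathfrak{g}\le 6$ every pair certified C2 by Benoist also passes the C3 existence test, so the two classes coincide, whereas for the split pairs $(\mathfrak{e}_{7(7)},\mathfrak{sl}(2,\mathbb{R})\oplus\mathfrak{f}_{4(4)})$ and $(\mathfrak{e}_{8(8)},\mathfrak{f}_{4(4)}\oplus\mathfrak{g}_{2(2)})$ Benoist's criterion confirms C2 while the Kobayashi test returns no proper $SL(2,\mathbb{R})$. The hard part will lie entirely on the C3 side in the rank $7,8$ cases: producing a complete and non-redundant list of $\mathfrak{sl}(2,\mathbb{R})$-subalgebras up to conjugacy together with their Cartan projections in $\mathfrak{e}_{7(7)}$ and $\mathfrak{e}_{8(8)}$, where the number of real nilpotent orbits is large and the ambient rank makes the Weyl-group computations heavy. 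The decisive and most delicate point is to verify, for these two candidates, that \emph{every} one of these rays fails the transversality test, since a single omitted or misidentified orbit would wrongly convert a C2-but-not-C3 example into a C3 example.
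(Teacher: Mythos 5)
Your skeleton is the paper's own: sweep $\mathcal{G}\mathcal{M}$, decide C2 by Benoist's criterion (Theorem \ref{thm:benois}) and C3 by the Kobayashi--Okuda criterion (Theorems \ref{thm:kobcrit}, \ref{the5}, \ref{thm:c3}), with the $\mathfrak{sl}_2$-subalgebras enumerated through nilpotent-orbit data. But as a proof of Theorem \ref{thm:main} the plan has a genuine gap: it assumes the uniform computation can be completed for every entry of the database, and the paper's proof is structured precisely around the fact that it cannot. For $13$ pairs with $\operatorname{rank}\mathfrak{g}\le 6$ (Table \ref{tab}) the restricted root system and the little Weyl group action relative to the copy of $\mathfrak{a}$ containing $\mathfrak{a}_{\mathfrak{h}}$ could not be computed exactly (the required eigenvalues leave the ground field, Section \ref{sec:imp}), so both of your loops fail before they start. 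The paper settles nine of these by rank arguments (the Calabi--Markus phenomenon, Theorem \ref{thm:imrn}, Corollary \ref{cor:rank1-2}) and the remaining four --- $(\mathfrak{sp}(3,3),\mathfrak{sl}(2,\mathbb{R})\oplus\mathfrak{su}(1,3))$, $(\mathfrak{so}(4,6),\mathfrak{so}(2,3))$, $(\mathfrak{e}_{6(2)},\mathfrak{sl}(3,\mathbb{R}))$, $(\mathfrak{e}_{6(2)},\mathfrak{g}_{2(2)})$ --- by a different mechanism: Proposition \ref{prop:justif} shows that a non-C3 space with $\operatorname{rank}_{\mathbb{R}}\mathfrak{h}=2$ forces $\mathfrak{a}_{\mathfrak{h}}$ to be a $2$-plane spanned by Weyl translates of neutral elements, and Propositions \ref{prop:4-c3}, \ref{prop:just-proc} then reduce C3 to an embedding-free check over the planes $\langle A_1,x\rangle$, $x\in X$. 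This reduction is essential, since the paper notes that the naive embedding-free search exceeds $10^{60}$ operations. Your proposal has no counterpart to any of this, so those four spaces, and with them the rank $\le 6$ half of the theorem, remain unproven; note also that for these resistant pairs the argument must work for \emph{every} embedding of $\mathfrak{h}$ into $\mathfrak{g}$ (the database lists $\mathfrak{h}$ up to abstract isomorphism), which the rank-based and $\mathcal{P}$-based arguments achieve but a single per-embedding computation would not.

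A second, smaller omission concerns exactly the point you flag as ``the hard part''. For split $E_7$ and $E_8$ you give no method for handling Weyl groups of order $\approx 2.9\times 10^6$ and $\approx 7\times 10^8$; the paper traverses the orbits $W\cdot h$ with Snow's algorithm (which enumerates an orbit without storing the group) together with early termination once a point of $W\cdot h\cap\mathfrak{a}_{\mathfrak{h}}$ is found. Moreover, C2 for the two new examples is not obtained from your Benoist loop --- which admits no early exit when the answer is that no $w$ works --- but for free from Theorem \ref{thm:imrn}(2), since $\operatorname{rank}_{\operatorname{a-hyp}}\mathfrak{e}_{7(7)}=7>5=\operatorname{rank}_{\mathbb{R}}\bigl(\mathfrak{sl}(2,\mathbb{R})\oplus\mathfrak{f}_{4(4)}\bigr)$ and $\operatorname{rank}_{\operatorname{a-hyp}}\mathfrak{e}_{8(8)}=8>6$; this is also how the database is pre-filtered in Subsection \ref{subsec:filter}. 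In short, the actual proof is theory-filtered computation plus bespoke arguments for the cases where computation fails, not the uniform sweep you describe.
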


This theorem is proved by computation, using the implementation of the
algorithms given in this paper. The arxiv version of this paper (available
on \verb+https://arxiv.org/abs/2206.01069+) has an
ancillary file which is a short {\sf GAP} program that checks that the
two homogeneous spaces listed in the theorem indeed do not satisfy C3.

Notice that the assumption of $H$ being maximal in $G$ is very interesting in the context of searching for homogeneous spaces which are $C2$ but not $C3.$ On  one hand, one should expect that homogeneous spaces which are not C3 are given by ``large'' semisimple subgroups of $G$ (to be more specific, by subgroups which have as large real rank as possible). On the other hand we are restricted by the condition C2 (so we cannot take $\operatorname{rank}_{\mathbb{R}}\mathfrak{g} = \operatorname{rank}_{\mathbb{R}}\mathfrak{h}$).

Finally, let us stress that the main results of this article are algorithms and their implementation which check C2 and  C3, as well as Theorem \ref{thm:main}. These use theoretical results contained in Theorem \ref{thm:benois} (Benoist), Theorem \ref{thm:kobcrit} (Kobayashi), Theorem \ref{the5}, Proposition \ref{thm:okuda} (Okuda) and Theorem \ref{thm:c3} which follows from Theorem \ref{the5} and which yields a method of computer checking whether $G/H$ is C3.

\section{Lie theory approach}

Let us note that our notation and terminology is close to the sources \cite{ov2, OV}. Throughout this section $G$ is a real reductive linear connected Lie group with the Lie algebra $\mathfrak{g}.$

\subsection{$\mathfrak{sl}(2,\mathbb{R})$-triples and antipodal hyperbolic orbits}
  We say that an element $X \in \mathfrak{g}$ is {\it hyperbolic}, if $X$ is semisimple (that is, $ad_{X}$ is diagonalizable) and all eigenvalues of $ad_{X}$ are real.
\begin{definition}
{\rm An adjoint orbit $O_{X}:=Ad_{G}X$ is said to be hyperbolic if $X$ (and therefore every element of $O_{X}$) is hyperbolic. An orbit $O_{Y}$ is antipodal if $-Y\in O_{Y}$ (and therefore for every $Z\in O_{Y},$ $-Z\in O_{Y}$).} 
\end{definition} 
\begin{proposition}[\cite{ok}, Proposition 4.5]
Let $X\in \mathfrak{g}$ be hyperbolic. The intersection of the complex hyperbolic orbit of $X$ in $\mathfrak{g}^{c}$ with $\mathfrak{g}$ is a single adjoint orbit $O_{X}$ in $\mathfrak{g}.$ 
\label{thm:okuda}
\end{proposition}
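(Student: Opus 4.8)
The plan is to reduce everything to the closed positive Weyl chamber $\mathfrak{a}^{+}$ and then establish a rigidity statement there. Write $G^{c}$ for the complexification of $G$ and $O_{X}^{c}:=\mathrm{Ad}_{G^{c}}X$ for the complex hyperbolic orbit. The inclusion $O_{X}\subseteq O_{X}^{c}\cap\mathfrak{g}$ is immediate, so the whole content is the reverse inclusion: any $Y\in\mathfrak{g}$ that is $\mathrm{Ad}_{G^{c}}$-conjugate to $X$ is already $\mathrm{Ad}_{G}$-conjugate to $X$. First I would check that such a $Y$ is itself hyperbolic: if $Y=\mathrm{Ad}_{g}X$ with $g\in G^{c}$, then $\mathrm{ad}_{Y}$ and $\mathrm{ad}_{X}$ are conjugate operators on $\mathfrak{g}^{c}$, hence $\mathrm{ad}_{Y}$ is diagonalizable with the same (real) spectrum as $\mathrm{ad}_{X}$; since $Y\in\mathfrak{g}$, the real operator $\mathrm{ad}_{Y}$ is then diagonalizable over $\mathbb{R}$ with real eigenvalues, i.e. $Y$ is hyperbolic.

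Next I would invoke the standard structural fact that every hyperbolic element of $\mathfrak{g}$ is $\mathrm{Ad}_{G}$-conjugate to a \emph{unique} point of the closed Weyl chamber $\mathfrak{a}^{+}$. Applying this to both $X$ and $Y$, and using $G\subset G^{c}$, I may replace $X$ and $Y$ by their chamber representatives and thereby assume $X,Y\in\mathfrak{a}^{+}$ with $Y=\mathrm{Ad}_{g}X$ for some $g\in G^{c}$. The proposition then follows from the rigidity claim: two elements of $\mathfrak{a}^{+}$ lying in one $G^{c}$-orbit must coincide.

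To prove the rigidity claim I would complexify a maximally split Cartan subalgebra. Choosing a maximal abelian $\mathfrak{t}\subset Z_{\mathfrak{k}}(\mathfrak{a})$, the space $\mathfrak{j}:=\mathfrak{t}\oplus\mathfrak{a}$ has complexification $\mathfrak{j}^{c}$ a Cartan subalgebra of $\mathfrak{g}^{c}$, and $\mathfrak{a}$ is exactly the common $(-1)$-eigenspace of the Cartan involution $\theta$ and $(+1)$-eigenspace of complex conjugation on $\mathfrak{j}^{c}$. Since $X,Y\in\mathfrak{a}\subset\mathfrak{j}^{c}$ are $G^{c}$-conjugate and both lie in the Cartan subalgebra $\mathfrak{j}^{c}$, they are conjugate by an element $w$ of the complex Weyl group $W^{c}=W(\mathfrak{g}^{c},\mathfrak{j}^{c})$. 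The crucial point is to descend this to the little Weyl group: I would show that $\mathrm{Ad}_{G^{c}}X\cap\mathfrak{a}$ equals the $W$-orbit of $X$, where $W=N_{K}(\mathfrak{a})/Z_{K}(\mathfrak{a})$ is realized inside $\mathrm{Ad}_{K}\subset\mathrm{Ad}_{G^{c}}$. Granting this, $Y=w'X$ for some $w'\in W$, and since $\mathfrak{a}^{+}$ is a fundamental domain for the action of $W$ on $\mathfrak{a}$, the two chamber points $X,Y$ must be equal, completing the argument.

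The main obstacle is precisely this descent from $W^{c}$-conjugacy to $W$-conjugacy for points of $\mathfrak{a}$: a priori the element $w\in W^{c}$ carrying $X$ to $Y$ need not preserve $\mathfrak{a}$ as a subspace. I expect to handle it with the $\theta$-twisted bookkeeping underlying the classical restriction theorem for little Weyl groups: applying $\theta$ to $wX=Y$ and using $\theta X=-X$, $\theta Y=-Y$ shows that $w$ and its $\theta$-conjugate $\theta w\theta^{-1}$ agree on $X$, and exploiting this relation (equivalently, invoking the Kostant--Rallis/Helgason description of $W$ as the restrictions to $\mathfrak{a}$ of the $\mathfrak{a}$-stabilizing elements of $W^{c}$) one replaces $w$ by a genuine little-Weyl-group element. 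Everything else---hyperbolicity of $Y$, uniqueness of the chamber representative, and the fundamental-domain property---is standard structure theory.
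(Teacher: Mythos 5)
The paper itself contains no proof of this statement---it is imported verbatim from Okuda \cite{ok}, Proposition 4.5---so your argument has to stand on its own. Most of it does: the hyperbolicity of $Y$, the reduction of both $X$ and $Y$ to their unique $\mathrm{Ad}_G$-representatives in $\mathfrak{a}^+$, and the fact that two semisimple elements of the Cartan subalgebra $\mathfrak{j}^c$ that are $G^c$-conjugate are already $W^c$-conjugate are all standard and correctly deployed. The genuine gap is exactly the step you yourself flag as ``the main obstacle,'' and your proposed fix does not close it. From $wX=Y$, $\theta X=-X$, $\theta Y=-Y$ you correctly deduce that $s:=w^{-1}(\theta w\theta)$ lies in $\mathrm{Stab}_{W^c}(X)$. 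But to replace $w$ by an element normalizing $\mathfrak{a}$ you need an $\mathfrak{a}$-stabilizing element \emph{inside the coset} $w\,\mathrm{Stab}_{W^c}(X)$, which amounts to writing the twisted cocycle $s$ as a twisted coboundary $t\,(\theta t\theta)^{-1}$ with $t\in \mathrm{Stab}_{W^c}(X)$; nothing in your argument produces such a $t$. The Kostant--Rallis/Helgason identification of $W$ with the restrictions of the $\mathfrak{a}$-stabilizing elements of $W^c$ does not help here: it describes what such elements restrict to, but gives no reason why the set of elements carrying $X$ to $Y$ should contain one. So the crucial descent from $W^c$-conjugacy to $W$-conjugacy is asserted, not proved.

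The good news is that the gap can be closed by a short argument that also makes the descent to $W$ unnecessary. Choose a positive system $\Delta^+$ of $\Delta(\mathfrak{g}^c,\mathfrak{j}^c)$ compatible with $\Sigma^+$, i.e.\ $\alpha\in\Delta^+$ whenever $\alpha|_{\mathfrak{a}}\in\Sigma^+$. Then any $X\in\mathfrak{a}^+$ satisfies $\alpha(X)\ge 0$ for every $\alpha\in\Delta^+$: roots restricting to positive restricted roots take nonnegative values on $X$, and roots vanishing on $\mathfrak{a}$ take the value $0$. Hence $\mathfrak{a}^+$ is contained in the closed dominant chamber of $\mathfrak{j}_{\mathbb{R}}=i\mathfrak{t}\oplus\mathfrak{a}$ for $W^c$. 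Since the closed chamber of a finite reflection group is a strict fundamental domain (each orbit meets it in exactly one point), your two $W^c$-conjugate points $X,Y\in\mathfrak{a}^+$ must coincide, and the proposition follows. This chamber-compatibility argument is essentially Okuda's own route. (Alternatively, the descent statement you wanted---$W^c$-conjugate elements of $\mathfrak{a}$ are $W$-conjugate---is true and can be proved via the Chevalley--Helgason restriction theorem for invariant polynomials together with separation of finite-group orbits by invariants, but that is a heavier tool than the chamber inclusion.)
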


A triple $(h,e,f)$ of vectors in $\mathfrak{g}$ is called an \textbf{{\it $\mathfrak{sl}(2,\mathbb{R})$-triple}} if
$$[h,e]=2e, \ \ [h,f]=-2f, \ \mathrm{and} \ [e,f]=h.$$
In what follows we will write $\mathfrak{sl}_2$-triple instead of $\mathfrak{sl}(2,\mathbb{R})$-triple, and call $h$ a neutral element.
One can show that $e$ is nilpotent and the adjoint orbit $Ad_{G}h$ is antipodal and hyperbolic (see \cite{cmg}). Furthermore there is the homomorphism of Lie groups
$$\Phi : SL(2,\mathbb{R}) \rightarrow G$$
defined by $d\Phi  \left( \begin{array}{cc} 1 & 0  \\ 0 & -1  \end{array} \right) = h, \ \ d\Phi  
\left( \begin{array}{cc} 0 & 1  \\ 0 & 0  \end{array} \right) = e, \ \ d\Phi  \left( 
\begin{array}{cc} 0 & 0  \\ 1 & 0  \end{array} \right) = f.$

\subsection{Proper $SL(2,\mathbb{R})$-actions and a properness criterion }\label{subsec:kob}

Let $H$ and $L$ be \comm{both} reductive subgroups in $G$. \comm{Let $\theta$ be a Cartan involution of $G$. We denote the differential of $\theta$ by the same letter. We may assume that  $\theta$, as a Cartan involution of $\mathfrak{g}$, has the property $\theta |_{\mathfrak{h}},$ $\theta |_{\mathfrak{l}}$ are Cartan involutions of $\mathfrak{h},$ $\mathfrak{l},$ respectively.} We may also assume that there are maximal abelian subalgebras $\mathfrak{a}$, $\mathfrak{a}_{\h},\mathfrak{a}_{\myl}$ in $\mathfrak{p}$, $\mathfrak{p}_{\h}$ and $\mathfrak{p}_{\myl}$ satisfying the inclusions
$$\mathfrak{a}_{\h}\subset\mathfrak{a},\mathfrak{a}_{\myl}\subset\mathfrak{a}.$$
\begin{theorem}[\cite{K89}, Theorem 4.1] In the above settings, the following conditions are equivalent:
\\ (i) $H$ acts on $G/L$ properly.
\\ (ii) $L$ acts on $G/H$ properly.
\\ (iii) $\mathfrak{a}_{\h} \cap W \mathfrak{a}_{\myl} = \{ 0 \}.$
\label{thm:kobcrit}
\end{theorem}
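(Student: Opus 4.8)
The plan is to follow Kobayashi's strategy of reducing properness of the action to a purely combinatorial condition on Cartan projections, in which the little Weyl group $W$ records how the Cartan subspaces $\a_{\h},\a_{\myl}$ sit inside $\a$. First I would reformulate properness via Kobayashi's ``$\pitchfork$'' relation: for subsets $S,T\subseteq G$ write $S\pitchfork T$ if $S\cap CTC$ is relatively compact for every compact $C\subseteq G$. Unwinding the definition of a proper action, the action of $L$ on $G/H$ is proper if and only if $L\pitchfork H$ in $G$, and likewise the action of $H$ on $G/L$ is proper iff $H\pitchfork L$. Since the condition ``$S\cap CTC$ relatively compact'' is symmetric in $S$ and $T$ (up to replacing $C$ by $C\cup C^{-1}$), we have $L\pitchfork H\Leftrightarrow H\pitchfork L$, which yields the equivalence (i)$\Leftrightarrow$(ii) at once.

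The substance is the equivalence with (iii). Here I would use the Cartan decomposition $G=K\exp(\a^+)K$ and the associated Cartan projection $\mu\colon G\to\a^+$, characterized by $g\in K\exp(\mu(g))K$. The first key step is to pass to the level of $\a^+$: for subsets of $\a^+$ say $A\pitchfork B$ if $A\cap(B+B_R)$ is bounded for every $R>0$ (no pair of sequences in $A$ and $B$ escapes to infinity at bounded distance). Then $L\pitchfork H$ in $G$ if and only if $\mu(L)\pitchfork\mu(H)$ in $\a^+$. This rests on Lipschitz-type estimates for the Cartan projection, namely that $\|\mu(g_1 g g_2)-\mu(g)\|$ is bounded by a constant depending only on $g_1,g_2$, so that multiplying on either side by a fixed compact set $C$ perturbs $\mu$ by only a bounded amount, while the two $K$-factors, lying in a compact group, contribute nothing to properness. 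The second key step computes $\mu$ on the reductive subgroups up to bounded distance: using the Cartan decomposition of $L$ with $\a_{\myl}\subseteq\a$, every element of $L$ lies in $K_L\exp(\a_{\myl})K_L$, and applying $\mu$ shows $\mu(L)$ lies within bounded distance of $(W\a_{\myl})\cap\a^+$, the set of $\a^+$-representatives of the $W$-orbits meeting $\a_{\myl}$; the analogous statement holds for $H$ and $\a_{\h}$.

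Finally I would combine these facts: $L$ acts properly on $G/H$ if and only if $(W\a_{\myl})\cap\a^+$ and $(W\a_{\h})\cap\a^+$ satisfy the divergence relation $\pitchfork$ in $\a^+$. Both are finite unions of chamber pieces of linear subspaces, hence closed cones, and for closed cones the $\pitchfork$ relation holds precisely when they meet only at the origin: along any common ray the distance between them is identically zero, so properness fails, whereas if they meet only at $0$ a compactness argument on the unit sphere bounds the angle between them away from zero, forcing linear divergence. By $W$-invariance the equality $(W\a_{\myl})\cap(W\a_{\h})\cap\a^+=\{0\}$ is equivalent to $\a_{\h}\cap W\a_{\myl}=\{0\}$, which is exactly (iii).

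The main obstacle is the first reduction step. Establishing rigorously that properness in $G$ is detected by the Cartan projection requires the uniform estimates on $\mu$ together with careful bookkeeping of the compact $K$-factors absorbed on each side; the converse direction, producing a divergent sequence in $\a^+$ from a divergent sequence in $G$, is the more delicate half. It is also here that the reductivity of $H$ and $L$ is essential, since it is what guarantees that $\mu(H)$ and $\mu(L)$ are, up to bounded distance, the genuinely ``linear'' sets $(W\a_{\h})\cap\a^+$ and $(W\a_{\myl})\cap\a^+$ on which the cone argument of the last step operates.
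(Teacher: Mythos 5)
The paper does not actually prove this theorem: it is imported verbatim from Kobayashi \cite{K89}, Theorem 4.1, so there is no internal proof to compare against. Your outline is, in substance, a reconstruction of Kobayashi's original argument (later refined by Benoist \cite{ben}): properness recast as the relation $L\pitchfork H$, whose evident symmetry gives (i)$\Leftrightarrow$(ii); the reduction of $\pitchfork$ in $G$ to the analogous divergence relation for the Cartan projections $\mu(L),\mu(H)$ in $\a^{+}$; the identification of $\mu(H)$ with $(W\a_{\h})\cap\a^{+}$ (and likewise for $L$) using the compatible Cartan decompositions; and the elementary fact that two closed cones in $\a^{+}$ satisfy the divergence relation exactly when they meet only at the origin, which translates into condition (iii). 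All of these steps are correct, and the final $W$-invariance argument converting $(W\a_{\h})\cap(W\a_{\myl})\cap\a^{+}=\{0\}$ into $\a_{\h}\cap W\a_{\myl}=\{0\}$ is right.

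The one point at which your write-up falls short of a complete proof is precisely the step you flag as ``the main obstacle'': the uniform estimate $\|\mu(g_{1}gg_{2})-\mu(g)\|\le \mathrm{const}(g_{1},g_{2})$, equivalently $\mu(CgC)\subset\mu(g)+C'$ with $C'$ compact for every compact $C\subset G$. This is not a routine Lipschitz estimate but the technical heart of the theorem; Kobayashi derives it from operator-norm estimates in finite-dimensional representations of $G$, and Benoist gives an independent proof. Asserting it without proof means the genuinely hard content is assumed rather than established. Two smaller points deserve explicit mention: the absorption of the $K_{L}$- and $K_{H}$-factors into $K$, which you use to compute $\mu(L)$ and $\mu(H)$, requires the maximal compact subgroups of $L$ and $H$ to lie in $K$ --- this is exactly what the compatibility of the Cartan involutions in the paper's standing hypotheses (``in the above settings'') guarantees, and without it $\mu(L)$ need not be close to $(W\a_{\myl})\cap\a^{+}$; and the sets $(W\a_{\h})\cap\a^{+}$, $(W\a_{\myl})\cap\a^{+}$ should be noted to be finite unions of closed convex cones (chamber pieces of finitely many subspaces), which is what makes your compactness-on-the-unit-sphere argument legitimate.
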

We can restate these conditions in the language of hyperbolic orbits for the action of $L=SL(2,\mathbb{R}).$
\begin{theorem}[\cite{ok}, Corollary 5.5]\label{the5}
Let $H$ be a reductive subgroup of $G$ and denote by $\mathfrak{h}$ the Lie algebra of $H$. Let $\Phi: SL(2,\mathbb{R}) \rightarrow G$ be a Lie group homomorphism, and denote its differential by $\phi= d \Phi: \mathfrak{sl}(2,\mathbb{R}) \rightarrow \mathfrak{g}$. We put
$$h_{\phi} : = \phi \left( \begin{array}{cc} 1 & 0 \\ 0 & -1
\end{array} \right) \in \mathfrak{g}.$$
Then $SL(2,\mathbb{R})$ acts on $G/H$ properly via $\Phi$ if and only if the real antipodal adjoint orbit through $h_{\phi}$ in $\mathfrak{g}$ does not meet $\mathfrak{a}_{\h}.$
\label{tthh}
\end{theorem}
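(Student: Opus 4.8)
The plan is to deduce Theorem \ref{the5} (the final statement, also labelled \ref{tthh}) from Kobayashi's criterion, Theorem \ref{thm:kobcrit}, by applying it to the specific subgroup $L = \Phi(SL(2,\mathbb{R}))$ and then translating the abstract condition $\mathfrak{a}_{\h}\cap W\mathfrak{a}_{\myl}=\{0\}$ into a statement about the single orbit through $h_\phi$. First I would observe that since $\phi\colon\mathfrak{sl}(2,\mathbb{R})\to\mathfrak{g}$ is a Lie algebra homomorphism, the image $\phi(\mathfrak{sl}(2,\mathbb{R}))$ is a reductive (indeed semisimple, being a quotient of $\mathfrak{sl}(2,\mathbb{R})$) subalgebra $\mathfrak{l}$, so that Theorem \ref{thm:kobcrit} applies to $L$ and $H$. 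The neutral element $h_\phi$ spans a maximal split torus of $\mathfrak{l}$: indeed $h_\phi$ is the image of $\left(\begin{smallmatrix}1&0\\0&-1\end{smallmatrix}\right)$, which generates the maximal abelian subspace of the $(-1)$-eigenspace of the standard Cartan involution of $\mathfrak{sl}(2,\mathbb{R})$. Hence, after conjugating $\mathfrak{l}$ so that its Cartan involution is the restriction of $\theta$, we may take $\mathfrak{a}_{\myl}=\mathbb{R}\,h_\phi$.

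Next I would feed this identification into condition (iii). With $\mathfrak{a}_{\myl}=\mathbb{R}\,h_\phi$, the criterion $\mathfrak{a}_{\h}\cap W\mathfrak{a}_{\myl}=\{0\}$ becomes the statement that $\mathfrak{a}_{\h}$ meets $W\cdot(\mathbb{R}\,h_\phi)$ only at the origin. The main work is to show that this $W$-orbit condition inside $\mathfrak{a}$ is equivalent to the global condition that the real antipodal adjoint orbit $\mathrm{Ad}_G\,h_\phi$ avoids $\mathfrak{a}_{\h}$ (except at $0$, which is automatic since $h_\phi\neq 0$ unless the representation is trivial). The bridge between the two is the standard fact (the Cartan/$KAK$-type decomposition for hyperbolic elements) that every hyperbolic adjoint orbit meets the chosen maximal abelian subspace $\mathfrak{a}\subset\mathfrak{p}$, and that the intersection of such an orbit with $\mathfrak{a}$ is precisely a single $W$-orbit. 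Since $h_\phi$ is a neutral element of an $\mathfrak{sl}_2$-triple, the discussion in Subsection~2.1 already records that $\mathrm{Ad}_G\,h_\phi$ is hyperbolic and antipodal, so $h_\phi$ may be conjugated into $\mathfrak{a}$ and its orbit's trace on $\mathfrak{a}$ is exactly $W\cdot h_\phi$.

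Putting these together, I would argue the equivalence in both directions. If the antipodal orbit through $h_\phi$ meets $\mathfrak{a}_{\h}$ at some nonzero point $Y$, then because $\mathfrak{a}_{\h}\subset\mathfrak{a}$ and the orbit's trace on $\mathfrak{a}$ is $W\cdot h_\phi$ (using that conjugacy can be realized by an element of $K$ followed by a Weyl group element), there is $w\in W$ with $w\,h_\phi$ proportional to a nonzero element of $\mathfrak{a}_{\h}$, violating (iii) for $\mathfrak{a}_{\myl}=\mathbb{R}\,h_\phi$; conversely a nonzero vector in $\mathfrak{a}_{\h}\cap W\mathbb{R}h_\phi$ is, up to scaling, an element of the antipodal orbit lying in $\mathfrak{a}_{\h}$. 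By Theorem \ref{thm:kobcrit}, (iii) is equivalent to $L=\Phi(SL(2,\mathbb{R}))$ acting properly on $G/H$, which is exactly proper action via $\Phi$, completing the chain.

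The step I expect to be the main obstacle is the careful bookkeeping of scalings and of the replacement of $\mathfrak{a}_{\myl}=\mathbb{R}\,h_\phi$ by its whole line: condition (iii) involves the full subspace $\mathfrak{a}_{\myl}$ and its $W$-translates, whereas Theorem \ref{the5} speaks of the orbit through the single point $h_\phi$. One must verify that intersecting with the line versus intersecting with the single antipodal orbit give the same vanishing condition, which rests on the homogeneity of hyperbolic orbits under positive scaling (so that any nonzero multiple of a conjugate of $h_\phi$ is itself in the antipodal orbit of a suitable multiple) together with the antipodality $-h_\phi\in\mathrm{Ad}_G\,h_\phi$ that lets one absorb signs. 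Making precise that ``the orbit's trace on $\mathfrak{a}$ is a single $W$-orbit'' for hyperbolic elements — and that this identifies $\mathrm{Ad}_G h_\phi\cap\mathfrak{a}$ with $W\cdot h_\phi$ — is the technical heart, and is where I would invoke Proposition \ref{thm:okuda} together with the standard theory of restricted roots.
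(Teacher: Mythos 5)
The paper does not actually prove Theorem \ref{the5}: it is imported verbatim from Okuda \cite{ok} (Corollary 5.5), so there is no internal proof to compare yours against. That said, your reconstruction is correct and is essentially the derivation in the cited source: apply Kobayashi's criterion (Theorem \ref{thm:kobcrit}) to $L=\Phi(SL(2,\mathbb{R}))$ with $\mathfrak{a}_{\mathfrak{l}}=\mathbb{R}h_\phi$, then translate condition (iii) into a statement about the adjoint orbit of $h_\phi$ via the fact that a hyperbolic orbit meets $\mathfrak{a}$ in exactly one $W$-orbit. Two points deserve tightening. First, the fact you call the technical heart is not supplied by Proposition \ref{thm:okuda}, which compares the \emph{complex} orbit with the real form and plays a different role in the paper; what you need is the classical statement (Kostant; see also \cite{K89}) that every hyperbolic element of $\mathfrak{g}$ is $\operatorname{Ad}(G)$-conjugate into $\mathfrak{a}$, with Weyl-group conjugation realized by elements of $N_K(\mathfrak{a})$, and that two elements of $\mathfrak{a}$ are $\operatorname{Ad}(G)$-conjugate if and only if they are $W$-conjugate. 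Second, to invoke Theorem \ref{thm:kobcrit} you must know that $L$ is a \emph{closed} reductive subgroup: closedness follows from semisimplicity of $\phi(\mathfrak{sl}(2,\mathbb{R}))$ (Yosida's theorem on analytic subgroups with semisimple Lie algebra), and since $\ker\Phi$ is central, hence finite, properness of the $SL(2,\mathbb{R})$-action via $\Phi$ is equivalent to properness of the $L$-action; the degenerate case $\phi=0$, where the orbit is $\{0\}\subset\mathfrak{a}_{\mathfrak{h}}$ and the action is trivial hence non-proper, is consistent with the criterion. Finally, the scaling bookkeeping you flag as a possible obstacle is immediate: since $\mathfrak{a}_{\mathfrak{h}}$ is a linear subspace, $t\,w h_\phi\in\mathfrak{a}_{\mathfrak{h}}$ for some $t\neq 0$ if and only if $w h_\phi\in\mathfrak{a}_{\mathfrak{h}}$, so passing between the line $\mathbb{R}h_\phi$ and the single orbit costs nothing, and antipodality is not even needed for that step.
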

Notice that, by the classification of nilpotent orbits, the number of subspaces $\mathbb{R}h_{\phi}\subset \mathfrak{a}$ is finite. Also, all $h_{\phi}\in \mathfrak{a}$ are known (see \cite{cmg} for details).

\subsection{a-hyperbolic rank}
Define the {\it a-hyperbolic rank} of a reductive Lie algebra $\mathfrak{g},$ denoted by $\textrm{rank}_{\textrm{a-hyp}}\mathfrak{g},$ as the dimension of $\mathfrak{b}.$ The a-hyperbolic rank of a reductive Lie algebra equals the sum of a-hyperbolic ranks of all its simple ideals. The a-hyperbolic rank of an absolutely simple Lie algebras can be calculated using Table \ref{tab1} (see \cite{bt} for more details about the a-hyperbolic rank).

This invariant will be used in the proof of Theorem \ref{thm:main}.

\begin{center}
 \begin{table}[h]
 \centering
 {\footnotesize
 \begin{tabular}{| c | c |}
   \hline                        
   $\mathfrak{g}$ & $\text{rank}_{\textrm{a-hyp}}$ \\
   \hline
   $\mathfrak{sl}(2k,\mathbb{R})$  & $k$  \\
   \footnotesize $k\geq 1$ & \\
   \hline
   $\mathfrak{sl}(2k+1,\mathbb{R})$  & $k$  \\
   \footnotesize $k\geq 1$ & \\
   \hline
   $\mathfrak{su}^{\ast}(4k)$  & $k$  \\
   \footnotesize $k\geq 1$ & \\
   \hline
   $\mathfrak{su}^{\ast}(4k+2)$  & $k$  \\
   \footnotesize $k\geq 1$ & \\
   \hline
   $\mathfrak{so}(2k+1,2k+1)$  & $2k$  \\
   \footnotesize $k\geq 2$ & \\
   \hline
	 $\mathfrak{e}_{6(6)}$ & $4$  \\
	 \hline
   $\mathfrak{e}_{6(-26)}$  & $1$  \\
   \hline  
 \end{tabular}
 }
\captionsetup{justification=centering}
 \caption{
Real forms of simple Lie algebras $\mathfrak{g}^{c},$ with $\text{rank}_{\mathbb{R}}(\mathfrak{g}) \neq \text{rank}_{\textrm{a-hyp}}(\mathfrak{g}).$
 }
 \label{tab1}
 \end{table}
\end{center}

\section{Checking C3 and C2}\label{sec:c2+c3}

In this section we describe computational methods for checking whether
a given homogeneous space $G/H$ is C3 and C2. \comm{The general theory allows us to work with  their Lie algebras $\g$ and $\h$.} For the use of  weighted Dynkin diagrams one can consult \cite{bt}, \cite{bjt}.

\subsection{Checking C3}

The following theorem follows directly from Theorem \ref{the5}. It underpins
our method for checking whether $G/H$ is C3.

\begin{theorem}\label{thm:c3}
Let $G\supset H$ be semisimple Lie groups with Lie algebras $\g$, $\h$.
Let $\g = \myk\oplus \p$ be a Cartan decomposition of $\g$ and assume (as we may)
that $\h = (\h\cap \myk)\oplus (\h\cap \p)$ is a Cartan decomposition of
$\h$.\textcolor[rgb]{0,0,1}{ Let $\a_{\h}\subset \h\cap\p$ be a maximal abelian subspace of $\h\cap \p$. Let $\a\subset
\p$ be a maximal abelian subspace of $\p $ with $\a_{\h}\subset \a$.} Let
$L\subset G$ be a subgroup isomorphic to $\SL(2,\R)$, with
Lie algebra $\myl$ spanned by $h,e,f$, $h\in\mathfrak{a}$, with the usual commutation relations.
Let $W$ be the little Weyl group of $\g$ with \comm{respect} to $\a$.
Then $G/H$ admits a proper action of $L$ if and only if the orbit $W\cdot h$
has no point in $\a_{\h}$. 
\end{theorem}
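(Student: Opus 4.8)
The plan is to derive Theorem~\ref{thm:c3} as a direct specialization of Theorem~\ref{the5} (Okuda's Corollary 5.5) to the case $L=\SL(2,\R)$, so the main work is to translate the orbit-theoretic condition ``the real antipodal adjoint orbit through $h_\phi$ does not meet $\a_\h$'' into the purely combinatorial condition ``$W\cdot h$ has no point in $\a_\h$''. First I would set $\phi=d\Phi$ to be the differential of the embedding $L\hookrightarrow G$, so that the distinguished element $h_\phi=\phi\bigl(\begin{smallmatrix}1&0\\0&-1\end{smallmatrix}\bigr)$ is exactly the neutral element $h$ of the $\ssl_2$-triple $(h,e,f)$ spanning $\myl$. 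By the discussion following the definition of $\ssl_2$-triples, $h$ is hyperbolic and its adjoint orbit $O_h=\mathrm{Ad}_G\,h$ is antipodal; thus $h_\phi=h$ is precisely the hyperbolic, antipodal element whose orbit controls properness. Applying Theorem~\ref{the5} then gives immediately that $L$ acts properly on $G/H$ if and only if $O_h\cap\a_\h=\{0\}$, or rather (phrasing it as Okuda does) the orbit through $h$ does not meet $\a_\h$.

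The key remaining step is to replace the full adjoint orbit $O_h$ by the finite orbit $W\cdot h$ of the little Weyl group, as far as the intersection with $\a_\h\subset\a$ is concerned. The standard fact I would invoke is that for a maximal abelian subspace $\a\subset\p$, the intersection of any hyperbolic adjoint orbit with $\a$ is a single $W$-orbit: concretely, since $h\in\a$ is hyperbolic and $\a$ meets every hyperbolic orbit, one has $O_h\cap\a=W\cdot h$. This is the restricted-root analogue of the classical statement that $K$-conjugacy of elements of $\a$ is governed by $W=N_K(\a)/Z_K(\a)$, and it is exactly the kind of statement underlying Proposition~\ref{thm:okuda}. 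Granting this, because $\a_\h\subset\a$ we may compute the intersection $O_h\cap\a_\h$ entirely inside $\a$: a point of $O_h$ lying in $\a_\h$ is in particular a point of $O_h\cap\a=W\cdot h$, so $O_h$ meets $\a_\h$ exactly when $W\cdot h$ meets $\a_\h$. Combining this equivalence with the output of Theorem~\ref{the5} yields the claimed criterion.

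The one point requiring genuine care, and the step I expect to be the main obstacle, is the reduction $O_h\cap\a=W\cdot h$ together with the verification that intersecting with the smaller subspace $\a_\h$ does not lose information. The subtlety is that a priori the antipodal orbit $O_h$ could meet $\a_\h$ at a point that is $G$-conjugate to $h$ but not $K$-conjugate, which would break the passage from the continuous orbit to the finite Weyl-group orbit; the resolution is that both $h$ and any putative intersection point lie in $\a$, where $G$-conjugacy collapses to $W$-conjugacy for hyperbolic elements. I would make this rigorous by citing the structure theory of $\p$ under $K$ (every element of $\p$ is $K$-conjugate into $\a$, and two elements of $\a$ are $G$-conjugate iff they are $W$-conjugate), references to which are already available through \cite{K89} and \cite{cmg} as cited in the surrounding text. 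Once this single orbit-equals-Weyl-orbit lemma is in hand, the theorem follows with no further computation, which is consistent with the paper's assertion that it ``follows directly from Theorem~\ref{the5}''.
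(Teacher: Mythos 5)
Your proposal is correct and takes essentially the same route as the paper: the paper gives no argument beyond asserting that Theorem~\ref{thm:c3} follows directly from Theorem~\ref{the5}, and your derivation---identifying $h_\phi$ with the neutral element $h$ and then converting the adjoint-orbit condition into the $W$-orbit condition---is exactly that intended specialization. The one lemma you isolate, namely that a hyperbolic adjoint orbit through $h\in\a$ satisfies $O_h\cap\a=W\cdot h$ (so that, since $\a_{\h}\subset\a$, meeting $\a_{\h}$ is detected by the finite orbit $W\cdot h$), is the standard fact that makes ``follows directly'' rigorous, so no gap remains.
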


We want to check whether there is a subgroup $L$ as in the theorem acting
properly on $G/H$. We assume that we are given the Lie algebras $\g$,
$\h$, along with a Cartan decomposition of $\g$ as in the theorem such that
$\h = \mathfrak{k}_{\mathfrak{h}} \oplus \p_{\h}$, where $\mathfrak{k}_{\mathfrak{h}} = \myk\cap \h$, $\p_{\h} = \p\cap\h$.
We also fix a maximal abelian subspace $\a\subset \p .$

The first problem is to find a conjugate $\a'$ of $\a$ such that
$\a' \cap \h$ is a maximal abelian subspace of $\p_{\h}$.
We do this in the following way.

\begin{enumerate}
\item Let $\a_{\h}\subset \p_{\h}$ be a maximal abelian subspace. Set
  $\hat{\a} = \a_{\h}$ and while $\dim \hat{\a} < \dim \a$ do the following
  \begin{itemize}
  \item Compute the centralizer $\z = \myc_\g( \hat{\a} )$.
  \item Find $x\in \z\cap\p$ with $x\not\in \hat{\a}$ and replace $\hat{\a}$
    by the subalgebra spanned by $\hat{\a}$ and $x$.  
  \end{itemize}
\item Set $\a' = \hat{\a}$.  
\end{enumerate}

We note that this works because any abelian subspace of $\p$ lies in a maximally
abelian subspace of $\p$. So the spaces $\z$ always contain a maximally
abelian subspace.

Now we write $\a$ instead of $\a'$, and assume that $\a_{\h}= \a\cap\h$ is a
maximally abelian subspace of $\p_{\h}$.

We note that the subalgebras of
$\g$ that are isomorphic to $\ssl(2,\R)$ are given by the nilpotent orbits
of $\g$. More precisely, we have the following. The nilpotent orbits in $\g^c$
are parametrized by weighted Dynkin diagrams. Each weighted Dynkin diagram
uniquely determines an element $h$ in a given Cartan subalgebra of $\g^c$.
This $h$ lies in a non-uniquely determined $\mathfrak{sl}_2$-triple $(h,e,f)$.
Let $G^c_{\mathrm{ad}}$ denote the adjoint group of $\g^c$. Then the above
construction gives a bijective correspondence between the list of weighted
Dynkin diagrams and the $G^c_{\mathrm{ad}}$-conjugacy classes of subalgebras
of $\g^c$ that are isomorphic to $\ssl_2$.

By \cite[Proposition 7.8]{ok} we have that a nilpotent orbit in $\g^c$
has points in $\g$ if and only if its weighted Dynkin diagram matches the
Satake diagram of $\g$. Moreover, these are precisely the nilpotent
orbits having $\mathfrak{sl}_2$-triples lying in $\g$. So in the next part of our
procedure we perform the following steps:

\begin{enumerate}
\item Obtain the list of weighted Dynkin diagrams of $\g^c$.
\item For the weighted Dynkin diagrams that match the Satake diagram of
  $\g$ compute an $\mathfrak{sl}_2$-triple $(h,e,f)$ with $h\in \a$. Let $\HH$ denote
  the set of all elements $h$ that are obtained in this way. 
\end{enumerate}

By \cite[Proposition 4.5]{ok} we have that $\HH$ is a set of representatives of
the $G$-orbits of elements $h'\in \g$ that lie in an $\mathfrak{sl}_2$-triple
$(h',e',f')$.

In order to apply Theorem \ref{thm:c3} we need to construct the action of
the little Weyl group $W$ on $\a$. Let $\Sigma\subset \a^*$ denote the set of
roots of $\g$ with respect to $\a$ (see \cite[Chapter 4, \S 4]{OV}).
For each root $\alpha\in \Sigma$ let $P_\alpha = \{ a\in \a\mid \alpha(a)=0\}$.
We define $s_\alpha : \a\to \a$ to be the reflection in the hyperplane
$P_\alpha$, where the inner product on $\a$ is given by the Killing form.
Then the group generated by the $s_\alpha$ is isomorphic to $W$ (\cite[Chapter 4,
  Proposition 4.2]{OV}). 

Now for each $h\in \HH$ we check whether $W\cdot h \cap \a_{\h} =\emptyset$.
If this holds then the subgroup of $G$ corresponding to the subalgebra
spanned by an $\mathfrak{sl}_2$-triple $(h,e,f)$ acts properly on $G/H$.
If the intersection is not empty for all $h\in \HH$ then $G/H$ does not admit a
proper action of a subgroup locally  isomorphic to $\SL(2,\R)$.

\subsection{Checking C2}

Our procedure for checking whether $G/H$ is a C2 space follows directly
from Theorem \ref{thm:benois}. The main ingredients have already been described
above. Indeed, we can compute the little Weyl group $W$ and hence its longest
element. It is then a straightforward task of linear algebra to compute
a basis of the space $\mathfrak{b}$. By running over $W$ we can check
whether there is a $w\in W$ such that $w\cdot \a_{\h}$ contains $\mathfrak{b}$.
The space $G/H$ is C2 if and only if there is no such $w$.

\subsection{Implementation}\label{sec:imp}

We have implemented the two procedures of this section in the computational
algebra system {\sf GAP}4 \cite{gap}, using the package {\sf CoReLG}
\cite{corelg}. This package has a database $\mathcal{G}\mathcal{M}$ which was computed in \cite{dg}. Also we
use the {\sf SLA} package which has lists of weighted Dynkin diagrams for
the nilpotent orbits of the semisimple complex Lie algebras.

Here we comment on two main computational problems that occur. Firstly,
in order to compute the elements of $W$ we need to compute the root system
of $\g$ with respect to the space $\a$. However, in some (but not many) cases
this required eigenvalues that did not lie in the ground field that
{\sf CoReLG} uses. (This field is denoted \verb+SqrtField+; here we do
not go into the details.) So for a few cases our procedures could not be
used.

Secondly, when $\g$ is a split form of type $E_7$ or $E_8$, and the little Weyl
group is equal to the usual Weyl group, the orbits $W\cdot h$ of $h\in\mathcal{H}$ can be very
large. In this case we can handle large orbits by using an algorithm due to Snow
  (\cite{snow}, see also \cite[\S 8.6]{grlie})
  for running over an orbit of the Weyl group without computing all of its
  elements. This algorithm has been implemented in the core system of
  {\sf GAP}4. 
  Also we note two things:
  \begin{itemize}
  \item If we find $h$ such that $W\cdot h \cap \a_{\h}=\emptyset$ then
    we can stop.
  \item If $W\cdot h \cap \a_{\h}\neq \emptyset$ then when enumerating the
    orbit $W\cdot h$ we can reasonably hope to find an element in the
    intersection rather quickly. 
  \end{itemize}
So in practice it usually suffices to enumerate only part of the orbits.

\subsection{Computational results}

The package {\sf CoReLG} contains the database $\mathcal{G}\mathcal{M}$.   
We have used our implementation to check whether $G/H$ is C2 and C3 where
the Lie algebras $\g$, $\h$ are taken from this database (if the subalgebra
$\h$ is reductive rather than semisimple we have taken its derived algebra
instead). We have done this for non-compact $\g$ of rank up to 6, and for
$\g$ equal to the split forms of type $E_7$ and $E_8$. We have obtained the
following results
\begin{itemize}
\item For the simple $\g$ up to rank $6$ there are 842 maximal semisimple subalgebras in
  total. We found no examples of $G/H$ that were C2 but not C3. 
  Of these subalgebras 13 could not be dealt with by our programs for the
  reasons in Section \ref{sec:imp}. We list them in Table \ref{tab}. Analyzing this table, we see that 9 of the spaces from this list do not yield $G/H$ which is C2 but not C3 by theoretical considerations. This is because for such spaces either $\operatorname{rank}_{\mathbb{R}}\mathfrak{g}=\operatorname{rank}_{\mathbb{R}}\mathfrak{h}$, or $\operatorname{rank}_{\mathbb{R}}\mathfrak{h} = 1<\operatorname{rank}_{\textrm{a-hyp}}\mathfrak{g}$ and the result follows from the 
   classification of nilpotent orbits, according to   Section \ref{subsec:filter}. The following homogeneous spaces from Table \ref{tab} are not covered by these arguments:
$$(\mathfrak{sp}(3,3),\mathfrak{sl}(2,\mathbb{R})\oplus \mathfrak{su}(1,3)),(\mathfrak{so}(4,6),\mathfrak{so}(2,3)),(\mathfrak{e}_{6(2)},\mathfrak{sl}(3,\mathbb{R})),(\mathfrak{e}_{6(2)},\mathfrak{g}_{2(2)}).$$
Here we apply one more procedure to verify that these spaces are C3. We will show the following.
\begin{proposition}\label{prop:justif}\comm{ Let $G/H$ be a homogeneous space of absolutely simple non-compact Lie group $G$ and a reductive subgroup $H$. Assume that
  $\operatorname{rank}_{a-hyp}\mathfrak{g}>1$ and  that $\operatorname{rank}_{\mathbb{R}}\mathfrak{h}=2$.} Let $\mathcal{P}$ denote the set of 2-planes in $\mathfrak{a}$ with the following property: $P\in\mathcal{P}$ if and only if it is generated by a pair of vectors $w_1h_1,w_2h_2, w_i\in W$ and $h_i$ are neutral elements of some $\mathfrak{sl}_2$-triple, $i=1,2$. Then $G/H$ is C3 or $\mathfrak{a}_{\h}\in\mathcal{P}$.

\end{proposition}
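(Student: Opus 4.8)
My plan is to prove the contrapositive: assuming $G/H$ is not C3, I will show $\a_{\h}\in\mathcal{P}$. By Theorem~\ref{thm:c3}, failure of C3 means that \emph{every} neutral element $h\in\HH$ satisfies $W\cdot h\cap\a_{\h}\neq\emptyset$; that is, each neutral orbit meets the $2$-plane $\a_{\h}$. So I would first fix, for every neutral element $h$, a point $p(h)\in(W\cdot h)\cap\a_{\h}$, which is automatically nonzero because $W$ acts orthogonally and $h\neq0$. Since $\dim\a_{\h}=2$, it then suffices to find two neutral elements $h_1,h_2$ for which $p(h_1),p(h_2)$ are linearly independent: writing $p(h_i)=w_ih_i$ with $w_i\in W$, independence forces $\operatorname{Span}(w_1h_1,w_2h_2)=\a_{\h}$, which is exactly the statement $\a_{\h}\in\mathcal{P}$.

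The bridge between ``independent points'' and the neutral elements is the following elementary observation, which I would isolate as a lemma. A line $\R v$ through the origin can meet both orbits $W\cdot h_1$ and $W\cdot h_2$ only if $h_1$ and $h_2$ are \emph{$W$-proportional}, meaning $h_1=c\,w\,h_2$ for some $w\in W$ and $c\in\R^\times$; indeed $\R v\cap W\cdot h_i\neq\emptyset$ gives $w_ih_i=c_iv$ with $c_i\neq0$, and eliminating $v$ yields $h_1=(c_1/c_2)\,w_1^{-1}w_2\,h_2$. Contrapositively, if $h_1$ and $h_2$ are \textbf{not} $W$-proportional, then $p(h_1)$ and $p(h_2)$ cannot share a line through the origin and so are independent. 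Hence the whole proposition reduces to producing two neutral elements that are not $W$-proportional.

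The hard part will be this last existence statement, and it is exactly where $\operatorname{rank}_{a-hyp}\g>1$ must be used. Every neutral element is hyperbolic with antipodal orbit, so its dominant representative lies in $\mathfrak{b}^{+}$; the key structural input I would invoke from \cite{bt} is the converse spanning fact that the dominant neutral elements span $\mathfrak{b}$, so that $\operatorname{rank}_{a-hyp}\g=\dim\operatorname{Span}\{\text{dominant neutral elements}\}$. Granting this, suppose for contradiction that all neutral elements were $W$-proportional to one $h_0$. Because every neutral orbit is antipodal ($-h\in W\cdot h$), one checks that each dominant neutral element is then a \emph{positive} multiple of the dominant representative $h_0^{+}$, so the dominant neutral elements span only the line $\R h_0^{+}$ and $\operatorname{rank}_{a-hyp}\g=1$, contradicting the hypothesis. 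Therefore two non-$W$-proportional neutral elements exist and the proof closes. I expect the only genuinely nontrivial ingredient to be the spanning statement $\mathfrak{b}=\operatorname{Span}\{\text{dominant neutral elements}\}$; everything else is the ``orbit-meets-line'' bookkeeping above, so extracting this fact cleanly from the a-hyperbolic rank theory is the crux.
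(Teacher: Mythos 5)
Your proof is correct and takes essentially the same route as the paper's: both use Theorem~\ref{tthh} to conclude that failure of C3 forces every neutral-element orbit $W\cdot h$ to meet the $2$-plane $\a_{\h}$, and both invoke the fact that $\mathfrak{b}$ is spanned by dominant neutral elements (the paper cites \cite[Theorem 1.1]{ok2} for this, rather than \cite{bt}) to show that $\operatorname{rank}_{\operatorname{a-hyp}}\g>1$ rules out all the intersection points lying on one line through the origin. The paper organizes this as a single contradiction argument (if $\a_{\h}\notin\mathcal{P}$, all orbits must meet the line $\langle w_1h_1\rangle$, forcing $\operatorname{rank}_{\operatorname{a-hyp}}\g=1$) instead of your contrapositive with an isolated $W$-proportionality lemma, but the mathematical content is identical.
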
 
\begin{proof}
 \comm{Assume that $G/H$ is not C3 and  $\mathfrak{a}_{\h}\not\in\mathcal{P}$}. 
 \textcolor[rgb]{0,0,1}{If an orbit of a  neutral element of some $\mathfrak{sl}_{2}$-triple does not meet $\mathfrak{a}_{\h}$ then $G/H$ is C3 (by Theorem \ref{tthh}).} Therefore, we may assume that there is one $w_1h_1\in\mathfrak{a}_{\h}.$  Each $W$-orbit of any neutral element must intersect $\mathfrak{a}_{\h}$. It follows that this orbit must intersect the line $\langle w_1h_1\rangle$, because otherwise one would get $\mathfrak{a}_{\h}=\langle w_1h_1,w_2h_2\rangle$ which is not the case. Hence, each $W$-orbit belongs to $\langle w_1h_1\rangle$. Since we can assume that $\mathbb{R}^+(w_1h_1)\subset \mathfrak{a}^+$, where $\mathfrak{a}^+$ is the positive Weyl chamber, we get $\operatorname{rank}_{\operatorname{a-hyp}}\mathfrak{g}=1$ by \cite[Theorem 1.1]{ok2}. Thus,  $\mathfrak{a}_{\h}\in\mathcal{P}$ or $G/H$ is C3. A contradiction.
\end{proof} 
This result implies the following.
\begin{proposition}\label{prop:4-c3}\comm{ Let $G/H$ be a homogeneous space such that 
 $\mathfrak{g}=\mathfrak{sp}(3,3),$ $\mathfrak{g}=\mathfrak{so}(4,6)$ or $\mathfrak{g}=\mathfrak{e}_{6(2)}$ and  $\operatorname{rank}_{\mathbb{R}}\mathfrak{h}=2$. Then  $G/H$  is C3.}
\end{proposition}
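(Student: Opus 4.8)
The plan is to apply Proposition \ref{prop:justif} to each of the four pairs $(\mathfrak{g},\mathfrak{h})$ at issue, namely $(\mathfrak{sp}(3,3),\mathfrak{sl}(2,\mathbb{R})\oplus\mathfrak{su}(1,3))$, $(\mathfrak{so}(4,6),\mathfrak{so}(2,3))$, $(\mathfrak{e}_{6(2)},\mathfrak{sl}(3,\mathbb{R}))$ and $(\mathfrak{e}_{6(2)},\mathfrak{g}_{2(2)})$, thereby reducing the C3 claim to the single check $\mathfrak{a}_{\h}\notin\mathcal{P}$. First I would confirm the hypotheses of Proposition \ref{prop:justif}. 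Each $G$ is absolutely simple and non-compact and each $H$ is reductive by assumption, and one checks directly that $\operatorname{rank}_{\mathbb{R}}(\mathfrak{sl}(2,\mathbb{R})\oplus\mathfrak{su}(1,3))=\operatorname{rank}_{\mathbb{R}}\mathfrak{so}(2,3)=\operatorname{rank}_{\mathbb{R}}\mathfrak{sl}(3,\mathbb{R})=\operatorname{rank}_{\mathbb{R}}\mathfrak{g}_{2(2)}=2$. For the a-hyperbolic rank I would invoke Table \ref{tab1}: since none of $\mathfrak{sp}(3,3)$, $\mathfrak{so}(4,6)$, $\mathfrak{e}_{6(2)}$ occurs there, for each of them $\operatorname{rank}_{\textrm{a-hyp}}\mathfrak{g}=\operatorname{rank}_{\mathbb{R}}\mathfrak{g}$, and these equal $3$, $4$ and $4$ respectively, all strictly greater than $1$.

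With the hypotheses verified, Proposition \ref{prop:justif} gives for each pair the dichotomy ``$G/H$ is C3, or $\mathfrak{a}_{\h}\in\mathcal{P}$''. Hence it suffices to show that the $2$-plane $\mathfrak{a}_{\h}$ is not generated by any pair $w_1 h_1, w_2 h_2$ with $w_i\in W$ and $h_i$ a neutral element of an $\mathfrak{sl}_2$-triple. The set $\mathcal{P}$ is finite: by the classification of nilpotent orbits there are only finitely many $W$-orbits of such neutral elements in $\mathfrak{a}$ (as recorded after Theorem \ref{tthh}), and $W$ is finite, so only finitely many $2$-planes of the prescribed form occur. The verification is therefore a finite problem: fix coordinates on $\mathfrak{a}$, compute $\mathfrak{a}_{\h}\subset\mathfrak{a}$, enumerate the candidate vectors $wh$ together with the $2$-planes they span, and confirm that $\mathfrak{a}_{\h}$ is absent from the list. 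I would carry this out with the implementation of Section \ref{sec:imp}.

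I expect the main obstacle to be the field-of-definition issue that forced these very pairs out of the main computation (Table \ref{tab}): determining the root system of $\mathfrak{g}$ relative to $\mathfrak{a}$, and hence the $W$-action, can require eigenvalues outside the ground field \verb+SqrtField+ used by {\sf CoReLG}. The payoff of routing through Proposition \ref{prop:justif} is that the full orbit enumeration $W\cdot h$ with intersection test against $\mathfrak{a}_{\h}$ demanded by the direct C3 criterion of Theorem \ref{tthh} is replaced by the much smaller membership test $\mathfrak{a}_{\h}\in\mathcal{P}$, which involves only the $W$-orbits of neutral elements. Once $\mathfrak{a}_{\h}\notin\mathcal{P}$ is confirmed in each of the four cases, Proposition \ref{prop:justif} forces $G/H$ to be C3, which is the assertion.
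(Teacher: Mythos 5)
Your set-up is fine as far as it goes: the rank and a-hyperbolic-rank verifications match the paper's opening sentence, and the appeal to Proposition \ref{prop:justif} is the right starting point. The genuine gap is in how you propose to resolve the resulting dichotomy, namely by checking $\mathfrak{a}_{\mathfrak{h}}\notin\mathcal{P}$. That check is not merely risky --- it is provably doomed for three of your four pairs. For $\mathfrak{h}=\mathfrak{sl}(2,\mathbb{R})\oplus\mathfrak{su}(1,3)$, $\mathfrak{h}=\mathfrak{so}(2,3)$ and $\mathfrak{h}=\mathfrak{g}_{2(2)}$ one has $\operatorname{rank}_{\textrm{a-hyp}}\mathfrak{h}=\operatorname{rank}_{\mathbb{R}}\mathfrak{h}=2$ (rank-one simple ideals have a-hyperbolic rank $1$, and $\mathfrak{so}(2,3)$, $\mathfrak{g}_{2(2)}$ are split of types $B_2$, $G_2$, whose Weyl groups contain $-\mathrm{id}$). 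Applying \cite[Theorem 1.1]{ok2} to $\mathfrak{h}$ itself, $\mathfrak{a}_{\mathfrak{h}}=\mathfrak{b}_{\mathfrak{h}}$ is then spanned by neutral elements of $\mathfrak{sl}_2$-triples of $\mathfrak{h}$; these are $\mathfrak{sl}_2$-triples of $\mathfrak{g}$ as well, and the spanning vectors lie in $\mathfrak{a}_{\mathfrak{h}}\subset\mathfrak{a}$ (equivalently, they are of the form $wA_j$ with $w\in W$, $A_j\in\mathcal{H}$, since elements of $\mathfrak{a}$ are $\operatorname{Ad}(G)$-conjugate exactly when they are $W$-conjugate --- the fact used to pass from Theorem \ref{the5} to Theorem \ref{thm:c3}). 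Hence $\mathfrak{a}_{\mathfrak{h}}\in\mathcal{P}$ by definition, your membership test would come back positive, and Proposition \ref{prop:justif} would then yield no conclusion whatsoever. This is precisely why the paper does \emph{not} attempt to rule out the second branch of the dichotomy: its proof accepts that $\mathfrak{a}_{\mathfrak{h}}$ may lie in $\mathcal{P}$ and instead verifies, via the procedure built on the reduced set $\mathcal{P}'$ and justified in Proposition \ref{prop:just-proc}, that \emph{every} plane $P\in\mathcal{P}$ is itself C3, i.e.\ that some orbit $W\cdot A_j$ misses $P$; then both branches of the dichotomy lead to C3.

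Two further problems. First, the computational payoff you claim is illusory: to decide $\mathfrak{a}_{\mathfrak{h}}\in\mathcal{P}$ you must express $\mathfrak{a}_{\mathfrak{h}}$ inside $\mathfrak{a}$ for the given embedding and compute the $W$-action on that adapted $\mathfrak{a}$, which is exactly the computation that fails over the field \texttt{SqrtField} and forced these pairs into Table \ref{tab} in the first place. The paper's procedure is feasible precisely because it never refers to the embedding $\mathfrak{a}_{\mathfrak{h}}\subset\mathfrak{a}$ at all: it quantifies over all candidate planes $\langle A_1,x\rangle$, $x\in X$, in a standard $\mathfrak{a}$. Second, Proposition \ref{prop:4-c3} as stated (see the remark following it) concerns an \emph{arbitrary} reductive subgroup $H$ with $\operatorname{rank}_{\mathbb{R}}\mathfrak{h}=2$, not only the four pairs from the database $\mathcal{GM}$; a pair-by-pair membership check could not establish the proposition in this generality even if it succeeded.
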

\begin{proof}
Note that for the Lie algebras  $\mathfrak{g}=\mathfrak{sp}(3,3),$ $\mathfrak{g}=\mathfrak{so}(4,6)$ and  $\mathfrak{g}=\mathfrak{e}_{6(2)}$ the a-hyperbolic rank is greater than 1.  By Proposition \ref{prop:justif}, it is sufficient to show that for any reductive subgroup $H$ of $G$ with $\mathfrak{a}_{\h}=P\in\mathcal{P}$, the homogeneous space $G/H$ is C3. This is verified by the  procedure described below. It is implemented and executed with the required result. 
\end{proof}
\begin{remark} {\rm Here we do not assume that $(\mathfrak{g},\mathfrak{h})\in\mathcal{GM}$, since the procedure which checks C3 is general and can be applied to all $P\in\mathcal{P}$.}
\end{remark}
Before writing down the procedure for verifying C3 in Proposition 3, let us mention the following. Condition C3 is expressed in terms of the action of the Weyl group acting on vectors which can be read off the weights of the weighted Dynkin diagram. However, the straightforward procedure does not work, because the number of required operations is too big. It is worth to consider the two procedures below.
\vskip6pt
\centerline{\bf Straightforward procedure}
\vskip6pt
1. Write down all weighted Dynkin diagrams for $\mathfrak{g}^c$ which match the Satake diagram for $\mathfrak{g}$.
\vskip6pt
2. Write down vectors $\mathcal{H} = \{ A_1,...,A_t \} \subset\mathfrak{a}$ whose coordinates are read off the weights of the weighted Dynkin diagram  (here $t$ is the number of the non-trivial weighted Dynkin diagrams of $\mathfrak{g}^c$ which match the Satake diagram).
\vskip6pt
3. If some of $A_1,...,A_t$ are on the same line, then choose only one of these, getting $A_1,...,A_k.$
\vskip6pt
4. Check, if there exist $w_1,...,w_k\in W$ such that $w_1A_1,...,w_kA_k$ lie in a 2-plane.
\vskip6pt
5. If $w_1,...,w_k\in W$ do not exist, then $G/H$ cannot be "not C3".
\vskip6pt

 The above procedure does not require the knowledge of the embedding $\mathfrak{a}_{\h}\subset\mathfrak{a}$.
By Theorem \ref{the5} $G/H$ is of class C3, if and only if at least one $W$-orbit of the element $h_{\phi}$ does not meet $\mathfrak{a}_{\h}$. In our case $\dim\mathfrak{a}_{\h}=2$,
therefore, Step 4 of the algorithm is a necessary condition to get $G/H$ not belonging to C3. This procedure cannot be implemented directly, because the number of operations is too big (it exceeds $10^{60}$). However, we find another procedure  to settle the cases of our 4 spaces. 
\vskip6pt
\vskip6pt
Now we will describe a procedure which verifies $C3$ for homogeneous spaces in Proposition \ref{prop:4-c3}. By some abuse of terminology we will say that a subspace $P\in\mathcal{P}$ is C3, if for some $h\in\mathcal{H}$, $W\cdot h\cap P=\emptyset$.
\vskip6pt
\centerline{\bf  Procedure for verifying C3 in Proposition \ref{prop:4-c3}}
\vskip6pt
1. Compute the following sets of vectors in $\mathfrak{a}$:
$$WA_j=\{w_sA_j\,|\,w_s\in W\},\,X=\cup_{j=1}^kWA_j$$
\vskip6pt
2. Create the set $\mathcal{P}'$ of 2-planes in $\mathfrak{a}$ spanned by $A_1$ and a  vector from $X$, so
$$P\in\mathcal{P}'\Leftrightarrow P=\langle A_1,x\rangle, \comm{x\in X\setminus\{\pm A_1\}}.$$
\vskip6pt
3. For each $P\in\mathcal{P}'$ verify C3 for $P$ .
\vskip6pt
4. If a given $P\in\mathcal{P}'$ is not C3, then stop. In such case there exists $\mathfrak{h}$ reductive in $\mathfrak{g}$ such that $\operatorname{rank}_{\mathbb{R}}\mathfrak{h}=2$ and $G/H$ is not $C3$ (one can take $\mathfrak{h}:=P$).
\vskip6pt
5. If every $P\in\mathcal{P}'$ is C3, then $G/H$ is C3,  for any reductive $H$ of real rank 2.
\vskip6pt
\centerline{\bf Justification of the  procedure}
\vskip6pt
The justification is given by the following.
\begin{proposition}\label{prop:just-proc} The procedure described above  verifies C3 for any pair $(\mathfrak{g},\mathfrak{h})$ as in Proposition \ref{prop:4-c3}.
\end{proposition}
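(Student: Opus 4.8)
The plan is to read the Procedure as a decision algorithm and to show that each of its two possible verdicts (Step~4 and Step~5) is sound; this is exactly what it means for the Procedure to ``verify C3''. First I would reformulate the target in terms of planes. Say that a $2$-plane $P\subset\a$ is \emph{C3} if $W\cdot h\cap P=\emptyset$ for some $h\in\HH$; by Proposition~\ref{thm:okuda} and the identity $O_{A_j}\cap\a=W\cdot A_j$ for a (hyperbolic, antipodal) neutral element, Theorem~\ref{the5} says precisely that $G/H$ is C3 iff $\a_{\h}$ is a C3 plane, using that $\HH$ is a complete, irredundant set of representatives of neutral elements up to $G$-conjugacy. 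Proposition~\ref{prop:justif} (applicable since these $\g$ have $\operatorname{rank}_{a-hyp}\g>1$ and we take $\operatorname{rank}_{\R}\h=2$) then reduces the claim of Proposition~\ref{prop:4-c3} to the assertion that \emph{every} $P\in\mathcal{P}$ is a C3 plane: if some $P\in\mathcal{P}$ failed to be C3, the abelian subalgebra $\h:=P\subset\p$ would be reductive with $\a_{\h}=P$ and $\operatorname{rank}_{\R}\h=2$, producing a non-C3 space. So I must show the Procedure correctly decides whether every $P\in\mathcal{P}$ is C3 while only inspecting the much smaller family $\mathcal{P}'$.

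The two structural facts driving the reduction are both instances of $W$-invariance. First, $\mathcal{P}$ is stable under $W$, since $w\langle w_1h_1,w_2h_2\rangle=\langle ww_1h_1,ww_2h_2\rangle\in\mathcal{P}$. Second, the property of being a C3 plane is $W$-invariant: as each orbit $W\cdot h$ is $W$-stable, one has $W\cdot h\cap P=\emptyset$ iff $W\cdot h\cap wP=\emptyset$, so $P$ is C3 iff $wP$ is C3 for every $w\in W$. I would record these first, as they let me move any plane of $\mathcal{P}$ by a Weyl element without changing its C3 status.

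The heart of the argument is the passage from $\mathcal{P}$ to $\mathcal{P}'$. Suppose $P\in\mathcal{P}$ is \emph{not} C3. By definition this means $W\cdot h\cap P\neq\emptyset$ for every $h\in\HH$; in particular $W\cdot A_1\cap P\neq\emptyset$, so there is $w\in W$ with $wA_1\in P$. Put $P^{*}:=w^{-1}P$. Then $A_1\in P^{*}$, while $P^{*}\in\mathcal{P}$ and $P^{*}$ is again not C3 by the two invariance facts. Writing $P^{*}=\langle u_1A_{i_1},u_2A_{i_2}\rangle$ with $u_1A_{i_1},u_2A_{i_2}\in X$, at least one generator $y$ is linearly independent from $A_1$ (else $P^{*}$ would be a line), and then $P^{*}=\langle A_1,y\rangle$ with $y\in X\setminus\{\pm A_1\}$, i.e. $P^{*}\in\mathcal{P}'$. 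Thus every non-C3 plane in $\mathcal{P}$ has a $W$-translate lying in $\mathcal{P}'$ and still non-C3. Conversely, since $\mathcal{P}'\subset\mathcal{P}$, any non-C3 member of $\mathcal{P}'$ is already a non-C3 plane of $\mathcal{P}$, which by the reduction above yields an explicit $\h$ with $\operatorname{rank}_{\R}\h=2$ and $G/H$ not C3, so Step~4 is sound. Combining the two implications, every $P'\in\mathcal{P}'$ is C3 iff every $P\in\mathcal{P}$ is C3; hence if the Procedure reaches Step~5 it correctly concludes that $G/H$ is C3 for every reductive $H$ of real rank $2$, and if it stops at Step~4 it has exhibited a genuine non-C3 space.

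The step I expect to be the main obstacle is the translation between the geometry on $\a$ and the homogeneous-space statement, namely the identity $O_{A_j}\cap\a=W\cdot A_j$ (so that ``the orbit meets $\a_{\h}$'' becomes ``$W\cdot A_j$ meets $P$'') together with the completeness and irredundancy of $\HH$; once these are in place, fixing the single base vector $A_1$ is justified precisely because failure of C3 forces $W\cdot A_1$ to meet $P$. The remaining points, namely collapsing orbits that lie on a common line in Step~3 and discarding $x\in\R A_1$ so that the members of $\mathcal{P}'$ are genuine $2$-planes, are routine once one uses the linearity of the $W$-action on $\a$.
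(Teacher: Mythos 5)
Your proof is correct and follows essentially the same route as the paper: reduce via Proposition \ref{prop:justif} to planes $\mathfrak{a}_{\h}\in\mathcal{P}$, then use $W$-equivariance (of $\mathcal{P}$ and of the orbits $W\cdot h$) to conjugate any non-C3 plane into one containing $A_1$, hence into $\mathcal{P}'$. The only notable difference is a small economy on your part: where the paper re-invokes $\operatorname{rank}_{\textrm{a-hyp}}\mathfrak{g}>1$ together with \cite[Theorem 1.1]{ok2} to produce the second spanning vector $x\in X$ of the non-C3 plane, you extract it by linear algebra from the already-established membership $P^{*}\in\mathcal{P}$, avoiding a second appeal to Okuda's theorem.
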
 
\begin{proof}
By Proposition \ref{prop:justif} we need to verify C3 only for $\mathfrak{a}_{\h}\in\mathcal{P}$. Thus, we need to show that we can restrict ourselves to checking C3 for any $P\in\mathcal{P}'$, that is, to fix $A_1 .$ If there exists a 2-plane $V\subset \mathfrak{a}$ which meets every orbit $WA_j$, then $V$ is spanned by some $w_iA_1$ and some other $x\in X$. This is because $\operatorname{rank}_{\textrm{a-hyp}}\mathfrak{g}>1$ and because of \cite[Theorem 1.1]{ok2}. After conjugating $V$ by an element of the Weyl group $W$ we may assume that $V$ is spanned by $A_1$ and some $x'\in X$. Since our criteria of properness from Subsection \ref{subsec:kob} are invariant with respect to conjugation in $G$, the result follows.
\end{proof}

One can roughly estimate the number of operations considering the case of the biggest little Weyl group of $E_{6(2)}$. The number of $A_j$ is $14$ (see \cite[Section 8.4]{cmg}), the number of elements in the little Weyl group is $1152$. Take into account, that $A_1$ is fixed, so $|\mathcal{P}|\leq |X|-1$ (in our case, $\leq 15\cdot 1151$).

\item Let $\g$ be split of type $E_7$ or $E_8$. We also checked the maximal
  semisimple subalgebras of these Lie algebras. Again a few subalgebras could
  not be dealt with by our programs. However, we did find two examples of
  homogeneous spaces that are C2 but not C3:
$$(\mathfrak{e}_{7(7)},\mathfrak{sl}(2,\mathbb{R})\oplus\mathfrak{f}_{4(4)}), (\mathfrak{e}_{8(8)},\mathfrak{f}_{4(4)}\oplus\mathfrak{g}_{2(2)}).$$  
\end{itemize}  

\begin{center}
 \begin{table}[h]
 \centering
 {\footnotesize
 \begin{tabular}{| c | c |}
   \hline

   \hline                        
   $(\mathfrak{g},\mathfrak{h})$ & $(\mathfrak{g},\mathfrak{h})$ \\
   \hline
   $(\mathfrak{su}(3,3),\mathfrak{sl}(2,\mathbb{R})\oplus \mathfrak{su}(3))$  & $(\mathfrak{su}(2,5),\mathfrak{so}(2,5))$  \\
   \hline
   $(\mathfrak{so}(6,5),\mathfrak{sl}(2,\mathbb{R}))$  & $(\mathfrak{so}(6,7),\mathfrak{sl}(2,\mathbb{R}))$  \\
   \hline
   $(\mathfrak{sp}(4,\mathbb{R}),\mathfrak{sl}(2,\mathbb{R}))$  & $(\mathfrak{sp}(5,\mathbb{R}),\mathfrak{sl}(2,\mathbb{R}))$  \\
   \hline
   $(\mathfrak{sp}(3,3),\mathfrak{sl}(2,\mathbb{R})\oplus\mathfrak{su}(1,3))$  & $(\mathfrak{sp}(6,\mathbb{R}),\mathfrak{sl}(2,\mathbb{R}))$  \\
   \hline
   $(\mathfrak{so}(4,6),\mathfrak{so}(2,3))$  & $(\mathfrak{e}_{6(2)},\mathfrak{su}(1,2))$  \\
   \hline
	 $(\mathfrak{e}_{6(2)},\mathfrak{sl}(3,\mathbb{R}))$ & $(\mathfrak{e}_{6(2)},\mathfrak{g}_{2(2)})$  \\
	 \hline  
	\multicolumn{2}{|c|}{$(\mathfrak{f}_{4(4)},\mathfrak{sl}(2,\mathbb{R}))$} \\
   \hline 
 \end{tabular}
 }
\captionsetup{justification=centering}
\caption{Spaces with $\operatorname{rank}\mathfrak{g}\leq 6$ described in Section \ref{sec:imp}.}
\label{tab}
 \end{table}
\end{center}

\section{Proof of Theorem \ref{thm:main}}
The algorithms checking C2 and C3 for the spaces in  the database are applied after filtering it with respect to certain conditions (see Subsection \ref{subsec:filter}). Also, the database  \cite{dg} which we use yields $\mathfrak{h}\subset\mathfrak{g}$ up to an abstract isomorphism. To prove Theorem \ref{thm:main} it is sufficient to show that for every pair $(\mathfrak{g}, \mathfrak{h})$ in Table \ref{tab} the condition C2 and C3 are equivalent for any embedding $\mathfrak{h}$ into $\mathfrak{g}.$ But this can be done using Corollary \ref{cor:rank1-2}, Proposition \ref{prop:4-c3} and Theorem \ref{thm:imrn}.

\subsection{Filtering of the database}\label{subsec:filter}
We exclude some cases when considering the homogeneous spaces from the database $\mathcal{GM}$. Our filtering is based on the following theoretical results.
\begin{theorem}[\cite{bt}, Theorem 8]\label{thm:imrn} Let $G$ be a connected semisimple linear Lie group and $H$ a reductive subgroup with a finite number of components.   The following holds:
\begin{enumerate}
\item if $\operatorname{rank}_{\operatorname{a-hyp}}\mathfrak{h}=\operatorname{rank}_{\operatorname{a-hyp}}\mathfrak{g}$, then $G/H$ does not admit discontinuous actions of non-virtually abelian discrete subgroups,
\item if $\operatorname{rank}_{\operatorname{a-hyp}}\mathfrak{g}>\operatorname{rank}_{\mathbb{R}}\mathfrak{h}$ then $G/H$ admits a discontinuous action of non-virtually abelian discrete subgroup.
\end{enumerate}
\end{theorem}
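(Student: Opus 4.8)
The statement is a criterion for property C2 (for the reductive-type space $G/H$, admitting a discontinuous action of a non-virtually abelian infinite discrete subgroup $\Gamma\subset G$ is exactly condition C2), so the plan is to deduce both parts from Benoist's criterion, Theorem~\ref{thm:benois}: $G/H$ is C2 if and only if $\mathfrak{b}\not\subseteq w\cdot\a_\h$ for every $w\in W$; equivalently $G/H$ fails C2 iff $\mathfrak{b}\subseteq w\cdot\a_\h$ for some $w\in W$. Writing $d:=\operatorname{rank}_{\operatorname{a-hyp}}\g=\dim\mathfrak{b}$ and $\operatorname{rank}_{\mathbb{R}}\h=\dim\a_\h$, the whole theorem becomes a statement about linear containments of the subspaces $\mathfrak{b},\a_\h\subseteq\a$ and their $W$-translates.

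Part (2) is then a dimension count. If $\operatorname{rank}_{\operatorname{a-hyp}}\g>\operatorname{rank}_{\mathbb{R}}\h$ then $\dim\mathfrak{b}>\dim\a_\h=\dim(w\cdot\a_\h)$ for every $w\in W$, so the $\dim\mathfrak{b}$-dimensional space $\mathfrak{b}$ cannot sit inside any $w\cdot\a_\h$; by Theorem~\ref{thm:benois}, $G/H$ is C2.

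For Part (1) the work is to identify $\mathfrak{b}$ with the antipodal hyperbolic directions in $\a$. Using that $-w_0$ preserves $\a^+$ and that two elements of $\a$ are $G$-conjugate iff they are $W$-conjugate, one checks that for $X\in\a^+$ the orbit $\operatorname{Ad}_GX$ is antipodal precisely when $-w_0(X)=X$ (indeed the $\a^+$-representative of $-X$ is $-w_0(X)$). Hence $\mathfrak{b}^+=\{X\in\a^+:\operatorname{Ad}_GX\text{ is antipodal}\}$, and the set of all antipodal hyperbolic elements of $\a$ is the $W$-saturation $W\cdot\mathfrak{b}^+$; the analogous description holds inside $\h$, giving $\mathfrak{b}_\h^+=\{X\in\a_\h^+:\operatorname{Ad}_HX\text{ antipodal}\}$ with $\dim\mathfrak{b}_\h=\operatorname{rank}_{\operatorname{a-hyp}}\h$. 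Now if $X\in\mathfrak{b}_\h^+$ then $-X\in\operatorname{Ad}_HX\subseteq\operatorname{Ad}_GX$ and $X\in\a\subseteq\p$ is hyperbolic, so $X$ is antipodal for $G$ as well; thus $\mathfrak{b}_\h^+\subseteq W\cdot\mathfrak{b}^+$. Under the hypothesis $\operatorname{rank}_{\operatorname{a-hyp}}\h=\operatorname{rank}_{\operatorname{a-hyp}}\g$ we have $\dim\mathfrak{b}_\h=\dim\mathfrak{b}=d$, so $\mathfrak{b}_\h^+$ is a full-dimensional convex cone in $\mathfrak{b}_\h$ contained in the finite union $\bigcup_{w\in W}w\cdot\mathfrak{b}^+$ of $d$-dimensional cones. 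A pigeonhole/measure argument then isolates one $w$: some $\mathfrak{b}_\h^+\cap(w\cdot\mathfrak{b}^+)$ has positive $d$-dimensional measure in $\mathfrak{b}_\h$, hence spans $\mathfrak{b}_\h$, and being contained in $w\cdot\mathfrak{b}$ it forces $\mathfrak{b}_\h=w\cdot\mathfrak{b}$. Therefore $\mathfrak{b}=w^{-1}\cdot\mathfrak{b}_\h\subseteq w^{-1}\cdot\a_\h$, Benoist's condition fails, and $G/H$ is not C2.

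The main obstacle is precisely this last step of Part (1): passing from the pointwise inclusion $\mathfrak{b}_\h^+\subseteq\bigcup_{w}w\cdot\mathfrak{b}^+$ (a union of cones, not a subspace) to a single linear identity $\mathfrak{b}_\h=w\cdot\mathfrak{b}$. The measure argument handles it provided $\mathfrak{b}_\h^+$ is genuinely full-dimensional in its span $\mathfrak{b}_\h$; this holds because $\mathfrak{b}_\h^+=\a_\h^+\cap\mathfrak{b}_\h$ is a convex cone with nonempty relative interior (average an interior point of $\a_\h^+$ over the involution $-w_0$ of $\h$ to land in $\operatorname{int}\a_\h^+\cap\mathfrak{b}_\h$). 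The supporting structural facts — that elements of $\a$ are hyperbolic, that $G$-conjugacy of such elements reduces to $W$-conjugacy, and the antipodality criterion $-w_0X=X$ — are standard and may be cited from the structure theory referenced above.
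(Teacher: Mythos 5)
You should know at the outset that this theorem is \emph{imported}: the paper states it as \cite{bt}, Theorem 8, and contains no proof of it, so there is no internal proof to compare your argument against; the comparison can only be with the strategy of the cited source. Judged on its own, your proof is correct, and it is essentially the argument one expects (and that \cite{bt} is built on): everything is reduced to Benoist's criterion (Theorem \ref{thm:benois}), with part (2) disposed of by the dimension count $\dim\mathfrak{b}>\dim\mathfrak{a}_{\mathfrak{h}}=\dim(w\cdot\mathfrak{a}_{\mathfrak{h}})$, and part (1) handled by identifying $\mathfrak{b}^{+}$ (resp. $\mathfrak{b}^{+}_{\mathfrak{h}}$) with the dominant antipodal hyperbolic elements, as in \cite{ok}, \cite{ok2}, so that $\mathfrak{b}^{+}_{\mathfrak{h}}\subseteq W\cdot\mathfrak{b}^{+}$, and then forcing the single linear identity $\mathfrak{b}_{\mathfrak{h}}=w\cdot\mathfrak{b}$ by your covering argument. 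That last step is the only delicate point and you treat it correctly: $\mathfrak{b}^{+}_{\mathfrak{h}}$ is a convex cone spanning $\mathfrak{b}_{\mathfrak{h}}$, hence has nonempty relative interior there, so it cannot be covered by finitely many proper subspaces $w\cdot\mathfrak{b}\cap\mathfrak{b}_{\mathfrak{h}}$; equality of a-hyperbolic ranks then gives $\mathfrak{b}_{\mathfrak{h}}=w\cdot\mathfrak{b}\subseteq w\cdot\mathfrak{a}_{\mathfrak{h}}$, violating Benoist's condition. The two facts you invoke as standard do hold and are safe to cite: $\operatorname{Ad}_G$-conjugacy of elements of $\mathfrak{a}$ reduces to $W$-conjugacy, and every element of the little Weyl group of $\mathfrak{h}$ (in particular $w_0^{\mathfrak{h}}$) is realized by $\operatorname{Ad}$ of an element of the maximal compact subgroup of $H^{0}$, which is what makes antipodality in $\mathfrak{h}$ imply antipodality in $\mathfrak{g}$.
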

\begin{proposition}\label{prop:rank1} Assume that $\operatorname{rank}_{\operatorname{a-hyp}}\mathfrak{g}>1$. Then there exist two copies of $\mathfrak{sl}(2,\mathbb{R})$ in $\mathfrak{g}$, with neutral elements $h_1,h_2\in\mathfrak{a}$ such that $\mathbb{R} h_1$ does not meet  the orbit $W(h_2)$.
\end{proposition}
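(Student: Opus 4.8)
The plan is to argue by contradiction, reducing the statement to the characterization of a-hyperbolic rank one provided by \cite[Theorem 1.1]{ok2}. First I would record the two structural facts I intend to use. By the discussion in Subsection~\ref{subsec:kob}, the neutral element $h$ of any $\mathfrak{sl}(2,\mathbb{R})$-triple in $\mathfrak{g}$ is hyperbolic and its adjoint orbit is antipodal; after conjugating by a suitable element of the little Weyl group $W$ we may assume that its $\mathfrak{a}^+$-representative lies in $\mathfrak{b}^+=\{X\in\mathfrak{a}^+\mid -w_0(X)=X\}$. Antipodality is exactly what guarantees that the full orbit $W(h)$ meets a line $\mathbb{R}h_1$ precisely when the $\mathfrak{a}^+$-representative of $h$ lies on the ray $\mathbb{R}^+h_1\subset\mathfrak{a}^+$, so that passing between lines and rays costs nothing.

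Since $\operatorname{rank}_{\operatorname{a-hyp}}\mathfrak{g}=\dim\mathfrak{b}>1$, in particular $\mathfrak{g}$ is noncompact and carries nonzero nilpotent elements, so at least one $\mathfrak{sl}(2,\mathbb{R})$-triple exists. I would fix such a triple with neutral element $h_1$ and, replacing it by a Weyl conjugate, arrange $\mathbb{R}^+h_1\subset\mathfrak{a}^+$. Suppose, for contradiction, that no second triple with the required property exists, that is, for every neutral element $h_2$ of an $\mathfrak{sl}(2,\mathbb{R})$-triple the orbit $W(h_2)$ meets $\mathbb{R}h_1$. This is exactly the hypothesis of \cite[Theorem 1.1]{ok2}, in the form already used in the proof of Proposition~\ref{prop:justif}, and it forces $\operatorname{rank}_{\operatorname{a-hyp}}\mathfrak{g}=1$, contradicting our assumption. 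Hence there is a neutral element $h_2$ with $W(h_2)\cap\mathbb{R}h_1=\emptyset$, and the two triples with neutral elements $h_1,h_2\in\mathfrak{a}$ are as required.

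The routine verifications are the existence of a nonzero nilpotent element and the reduction of a neutral element into $\mathfrak{a}^+$, both standard. The single genuine input is \cite[Theorem 1.1]{ok2}, which converts the geometric statement ``all antipodal hyperbolic neutral orbits are swept out by a single line'' into the numerical conclusion $\dim\mathfrak{b}=1$. Consequently the only point requiring care is to phrase the contradiction hypothesis so that it matches Okuda's theorem verbatim: one must pass to $\mathfrak{a}^+$-representatives and use antipodality to replace the line $\mathbb{R}h_1$ by the ray $\mathbb{R}^+h_1$, after which the citation applies directly. I expect this translation, rather than any computation, to be the main obstacle.
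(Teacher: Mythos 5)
Your proof is correct and rests on the same two ingredients as the paper's: Okuda's Theorem 1.1 (that $\mathfrak{b}$ is spanned by neutral elements of $\mathfrak{sl}_2$-triples lying in $\mathfrak{b}^+$) combined with the facts that the closed positive Weyl chamber is a fundamental domain for $W$ and that neutral-element orbits are antipodal. The paper phrases this directly --- two linearly independent neutral elements in $\mathfrak{b}^+$ lie on distinct half-lines of $\mathfrak{a}^+$, which $W$ cannot identify --- whereas you run the identical argument in contradiction form; the difference is purely one of logical packaging.
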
 
\begin{proof} It is clear that the Weyl group cannot send a half-line lying in the positive Weyl chamber to other half-line in the positive Weyl chamber. By \cite[Theorem 1.1]{ok2}, $\mathfrak{b}$ is spanned by neutral elements (contained in $\mathfrak{b}^{+}$) of some copies of $\mathfrak{sl}(2,\mathbb{R})$ in $\mathfrak{g}.$

\end{proof}
\begin{corollary}\label{cor:rank1-2}
If $\operatorname{rank}_{\mathbb{R}}\mathfrak{h}=1<\operatorname{rank}_{\operatorname{a-hyp}}\mathfrak{g}$ then $(\mathfrak{g},\mathfrak{h})$ is C3.
\end{corollary}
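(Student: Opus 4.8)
The plan is to read C3 directly off the orbit criterion of Theorem~\ref{thm:c3}: the space $G/H$ is C3 precisely when some neutral element $h$ of an $\mathfrak{sl}_2$-triple in $\g$ satisfies $W\cdot h\cap\a_{\h}=\emptyset$. The hypothesis $\operatorname{rank}_{\mathbb{R}}\h=1$ forces $\dim\a_{\h}=1$, so $\a_{\h}$ is a single line $\mathbb{R}v$ with $v\neq0$. Since $W\cdot h$ is $W$-stable, the condition $W\cdot h\cap\a_{\h}=\emptyset$ is unchanged if we replace $\a_{\h}$ by a $W$-conjugate $w\a_{\h}$; so I would first conjugate the generator $v$ into the closed positive Weyl chamber $\a^{+}$. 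As $\a^{+}$ is a convex cone, this gives $\mathbb{R}^{+}v\subseteq\a^{+}$.

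Next I would invoke Proposition~\ref{prop:rank1}: since $\operatorname{rank}_{\operatorname{a-hyp}}\g=\dim\mathfrak{b}>1$, its proof exhibits two linearly independent neutral elements $h_1,h_2\in\mathfrak{b}^{+}\subseteq\a^{+}$. Because $h_1,h_2$ are linearly independent, the line $\mathbb{R}v$ can coincide with at most one of $\mathbb{R}h_1,\mathbb{R}h_2$, so after relabelling I may assume $\mathbb{R}^{+}h_2\neq\mathbb{R}^{+}v$. I then claim $W\cdot h_2\cap\mathbb{R}v=\emptyset$, which by Theorem~\ref{thm:c3} yields C3. To see this, suppose $wh_2=tv$ for some $w\in W$ and $t\neq0$. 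If $t>0$ then $wh_2\in\mathbb{R}^{+}v\subseteq\a^{+}$, so $wh_2$ and $h_2$ are both chamber representatives of the orbit $W\cdot h_2$; uniqueness of the chamber representative forces $wh_2=h_2$, hence $\mathbb{R}^{+}h_2=\mathbb{R}^{+}v$, contradicting the relabelling.

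The remaining case $t<0$ is the main obstacle, and it is exactly where the antipodal nature of neutral orbits is needed. Since $h_2$ is a neutral element, its adjoint $G$-orbit is antipodal, and by Proposition~\ref{thm:okuda} that orbit meets $\a$ in the single $W$-orbit $W\cdot h_2$; hence $-h_2\in W\cdot h_2$ and $W\cdot h_2$ is symmetric under $X\mapsto-X$. Thus when $t<0$ the point $-wh_2$ again lies in $W\cdot h_2$ and in $\mathbb{R}^{+}v\subseteq\a^{+}$, and the same chamber-uniqueness argument gives $\mathbb{R}^{+}h_2=\mathbb{R}^{+}v$, a contradiction. This closes the claim and proves the Corollary. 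The only facts requiring care are the $W$-invariance of the properness condition used in the normalization step and the uniqueness of the chamber representative of a Weyl orbit; both are standard.
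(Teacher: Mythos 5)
Your proof is correct and takes essentially the same route as the paper: both arguments rest on Proposition~\ref{prop:rank1} (two linearly independent neutral elements $h_1,h_2\in\mathfrak{b}^{+}$, coming from Okuda's theorem) plus the fundamental-domain property of the closed Weyl chamber, the only difference being that the paper cites Proposition~\ref{prop:rank1} as a black box in a short proof by contradiction, while you unwind its chamber-uniqueness and antipodality argument inline to exhibit directly an orbit $W\cdot h_2$ missing $\mathfrak{a}_{\mathfrak{h}}$. One minor remark: the symmetry $-h_2\in W\cdot h_2$ follows immediately from $h_2\in\mathfrak{b}^{+}$ (since $w_0h_2=-h_2$), which is cleaner than your appeal to Proposition~\ref{thm:okuda} --- that proposition concerns the complex orbit meeting $\mathfrak{g}$, not the real adjoint orbit meeting $\mathfrak{a}$, although the fact you actually need is standard and is the same one implicitly used to pass from Theorem~\ref{the5} to Theorem~\ref{thm:c3}.
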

\begin{proof}
Assume that $(\mathfrak{g},\mathfrak{h})$ is not C3. Take $h_{1}, h_{2}\in\mathfrak{a}$ given in Proposition \ref{prop:rank1}. By Theorem \ref{tthh} there exists $w_{1}\in W$ such that $\mathbb{R}w_{1}h_{1}=\mathfrak{a}_{\h}$ (as $\mathfrak{a}_{\h}$ is 1-dimensional) and $w_{2}\in W$ such that $w_{2}h_{2}\in \mathfrak{a}_{\h}$. But this contradicts Proposition \ref{prop:rank1}.
\end{proof}

\noindent The Calabi-Markus phenomenon and Theorem \ref{thm:imrn}  eliminate the necessity to consider the cases
\begin{enumerate}
\item $\operatorname{rank}_{\mathbb{R}}\mathfrak{g}=\operatorname{rank}_{\mathbb{R}}\mathfrak{h}$,
\item $\operatorname{rank}_{\textrm{a-hyp}}\mathfrak{g}=\operatorname{rank}_{\textrm{a-hyp}}\mathfrak{h}$
\end{enumerate}
since they are not C2. 
Finally, we eliminate the case $\operatorname{rank}_{\mathbb{R}}\mathfrak{h}=1<\operatorname{rank}_{\textrm{a-hyp}}\mathfrak{g}$. This follows from Corollary \ref{cor:rank1-2}.
 \begin{remark}{\rm Looking for $G/H$ which are C2 but not C3, when $\operatorname{rank}\mathfrak{g}>6$ we consider the cases
$$\operatorname{rank}_{\operatorname{a-hyp}}\mathfrak{g}>\operatorname{rank}_{\mathbb{R}}\mathfrak{h}$$
 since  C2 is satisfied by Theorem \ref{thm:imrn}, and  C3 is checked by the general procedure.}
\end{remark}

\subsection{Completion of the proof of Theorem \ref{thm:main}}
We complete the proof of Theorem \ref{thm:main} running the computer program which checks the remaining cases in the database. 
\vskip10pt
\noindent{\bf Acknowledgment}. The first named author was supported by the National Science Center, Poland, grant NCN, no.   2018/31/D/ST1/00083. The fourth named author was supported by the National Science Center, Poland, grant no.  2018/31/B/ST1/00053. 

We cordially thank the referees for valuable advice.

\vskip20pt
Department of Mathematics,

\noindent University of Trento,

\noindent Via Sommarive 14,

\noindent I-38123 Povo (Trento), Italy

\noindent e-mail address: degraaf@science.unitn.it
\vskip20pt
\noindent Faculty of Mathematics and Computer Science

\noindent University of Warmia and Mazury

\noindent S\l\/oneczna 54, 10-710 Olsztyn, Poland

\noindent e-mail adresses:

\noindent mabo@matman.uwm.edu.pl (MB)

\noindent piojas@matman.uwm.edu.pl (PJ)

\noindent tralle@matman.uwm.edu.pl (AT)

\end{document}